\numberwithin{equation}{section}
\xpatchcmd{\algorithmic}{\setcounter}{\algorithmicfont\setcounter}{}{}
\providecommand{\algorithmicfont}{}
\providecommand{\setalgorithmicfont}[1]{\renewcommand{\algorithmicfont}{#1}}
\newlength{\bibitemsep}\setlength{\bibitemsep}{.3\baselineskip plus .05\baselineskip minus .05\baselineskip}
\newlength{\bibparskip}\setlength{\bibparskip}{0pt}
\let\oldthebibliography\thebibliography
\renewcommand\thebibliography[1]{%
  \oldthebibliography{#1}%
  \setlength{\parskip}{\bibitemsep}%
  \setlength{\itemsep}{\bibparskip}%
}
\newtheorem{truth}{Theorem}
\newtheorem{remark}{Remark}
\newtheorem{lemma}{Lemma}
\title{A discontinuous Galerkin method for nonlinear biharmonic Schr\"{o}dinger equations}
\begin{document}
\author{Lu Zhang 
\thanks{Department of Applied Physics and Applied Mathematics, Columbia University, New York, NY 10027, USA. Email: lz2784@columbia.edu}}
\maketitle

\begin{abstract}
This paper proposes and analyzes a fully discrete scheme that discretizes space with an ultra-weak local discontinuous Galerkin scheme and time with the Crank--Nicolson method for the nonlinear biharmonic Schr\"{o}dinger equation. We first rewrite the problem into a system with a second-order spatial derivative and then apply the ultra-weak discontinuous Galerkin method to the system. The proposed scheme is more computationally efficient compared with the local discontinuous Galerkin method because of fewer auxiliary variables, and unconditionally stable without any penalty terms; it also preserves the mass and Hamiltonian conservation that are important properties of the nonlinear biharmonic Schrödinger equation. We also derive optimal $L^2$-error estimates of the semi-discrete scheme that measure both the solution and the auxiliary variable with general nonlinear terms.  Several numerical studies demonstrate and support our theoretical findings. 
\end{abstract}

\textbf{Keywords}: discontinuous Galerkin,  nonlinear biharmonic Schr\"{o}dinger equation, stability, error estimates

\textbf{AMS subject }: 65M12, 	65M60

\section{Introduction}
Nonlinear biharmonic Schr\"{o}dinger equations arise commonly from the study of the propagation of intense laser beams, quantum mechanics, nonlinear optical fibers, propagation of electromagnetic beams in plasma, and many other applications (see \cite{lam1977self, agrawal2000nonlinear, ben2000dispersion, cui2007well,karpman1996stabilization, karpman2000stability, pausader2009cubic} and references therein). In this paper, we consider the nonlinear biharmonic Schr\"{o}dinger equations 
\begin{equation}\label{problem}
iu_{t} + \kappa\Delta^2 u + uF'(|u|^2) = 0, \ \ ({\bf x},t)\in \Omega\times(0,T],\quad \Omega\subseteq\mathbb{R}^d, \quad d = 1,2,
\end{equation}
with suitable boundary and initial conditions
, where $u({\bf x},t)$ is a complex function of space-time variable $({\bf x},t)$ with $|u|$ being its {Euclidean length}, {$i = \sqrt{-1}$ is the imaginary number}, $\Delta^2=\nabla ^4$ is the biharmonic operator, 
$F'(y) = \frac{dF}{dy}$ is a real function that measures the medium nonlinearity, and $\kappa$ is a real constant {dependent on the physical relevance}. Problem (\ref{problem}) is a special case of the nonlinear Schr\"{o}dinger equation in the following form with $\zeta = 0$
\begin{equation}\label{full_problem}
iu_{t} + \zeta \Delta u + \kappa \Delta^2u + uF'(|u|^2) = 0,
\end{equation}
which was introduced in \cite{karpman1996stabilization,karpman2000stability} to study the role of small fourth-order dispersion in the propagation of intense laser beams in a bulk medium.  For $F'(|u|^2) = |u|^{2\sigma}$ with $\sigma$ being a positive integer, there are many researchers in the past a few years concerned with local and global well-posedness and formation of singularities for both (\ref{problem}) and (\ref{full_problem}). For example, when $\kappa < 0$, \cite{karpman1996stabilization,karpman2000stability} have shown that the waveguide solutions of equation (\ref{full_problem}) are stable for all $\kappa < 0$ when $d\sigma \leq 2$ and also stable for $\kappa \ll -1$ when $2 <d\sigma<4$, but unstable for all $\kappa < 0$ when $\sigma d \geq 4$. (\ref{problem}) admits a very similar result with (\ref{full_problem}), \cite{ilan2002self, baruch2011singular} proved that for $\kappa > 0$, (\ref{problem}) is defocusing and exists globally, but when $\kappa < 0$, it is focusing, and there exists a critical exponent $d\sigma = 4$ that determines the blow-ups and global existence subject to the ($L^2$) size of the initial data. 

This paper presents and analyzes a fully discrete ultra-weak local discontinuous Galerkin scheme with the Crank--Nicolson time discretization for the nonlinear biharmonic Schr\"{o}dinger equation (\ref{problem}). The proposed scheme is implicit in time, unconditionally stable and preserves the mass and Hamiltonian associated with the problem at a discrete level.
 
Before proceeding further, we want to note that various numerical methods have been proposed to solve nonlinear Schr\"{o}dinger equations (\ref{full_problem}) for $\kappa = 0$ in literature, such as the finite difference methods \cite{akrivis1993finite,chang1995finite,besse2004relaxation,bao2013optimal}, the time-splitting pseudo-spectral methods \cite{bao2003numerical,robinson1993numerical,pathria1990pseudo}, the finite element methods \cite{akrivis1991fully,karakashian1999space,dag1999quadratic} and the discontinuous Galerkin (DG) methods \cite{xu2012optimal, chen2019ultra, zhang2021local}, to name a few. However, few numerical methods have been considered for problem (\ref{full_problem}) with $\kappa \neq 0$ in the literature. \cite{xiao2019conservative} proposed a conservative linearly-implicit difference scheme for the modified Zakharov system with high-order space fractional quantum correction, but the method converges only second-order in space. Zhang and Su \cite{zhang2021conservative} improve the convergence of the method to fourth-order in space by developing a linearly-implicit compact difference scheme when solving the Quantum Zakharov System. However, both schemes were only considered in the one-dimensional case. The Schrodinger equation in multiple dimensions has many applications, such as optimal observation or sensor location problems in piezoelectric actuators and damage detection.  Baruch et al. \cite{baruch2010singular,baruch2010ring} investigated singular solutions and ring-type singular solutions of (\ref{problem}) in the multidimensional case by adaptive grid methods and static grid redistribution methods, respectively. No numerical analysis is presented therein to test the numerical methods such as their stability, convergence, etc.  Nonetheless, we want to highlight that nonlinear Schr\"{o}dinger equations with higher-order dispersive term (\ref{full_problem}) in both one dimensional and multi-dimensional cases not only plays an important role in the physical model description, such as the quantum effect in the propagation of Langmuir waves in plasma, but also brings interesting mathematical effect, such as stabilization of soliton instabilities.  Therefore, it is worth, theoretically and practically, developing stable and efficient numerical methods for better understanding the dynamics within nonlinear biharmonic Schr\"{o}dinger equations. 

In this paper, we develop a stable and computationally favored ultra-weak local DG method to solve the nonlinear biharmonic Schr\"{o}dinger equation in both one dimension and two dimensions. The reason for us to establish DG methods is because of its flexibility in handling geometry, provable convergence properties, accommodating $h$-$p$ adaptivity, and high parallel efficiency.  DG method was first designed by Reed and Hill \cite{reed1973triangular} to solve a problem arising from first-order neutron transport subject to a conservation law.  This finite-element method applies a piecewise polynomial basis for both the numerical and test function, and it was originally designed to deal with the first spatial derivative only (see, e.g., \cite{reed1973triangular,cockburn1989tvb,cockburn1989kutta,cockburn1989tvb,cockburn1998runge} for detailed discussions). The original DG method has developed in several directions over the past few decades.  For instance, Cockburn and Shu \cite{cockburn1998local} proposed the so-called local discontinuous Galerkin (LDG) method to solve a wide class of nonlinear convection-diffusion equations with high-order spatial derivatives.  By introducing auxiliary variables that reduce the original problem into a lower-order system, typically with first-order spatial derivatives, the LDG methods ensure the stability of the scheme by suitable numerical fluxes embedded with the resulting system.  See \cite{dong2009analysis,xu2012optimal,yan2002local} and references therein for recent developments of the LDG method. Another streamline of development is motivated by the urge to solve high-order problems, and this includes the ultra-weak discontinuous Galerkin method (UWDG) introduced by \cite{despres1994formulation} for linear elliptic PDEs.  The idea of the UWDG method is to shift all the spatial derivatives through integration by parts to the test function in the weak formulation, and the stability of the scheme is guaranteed by certain numerical fluxes and additional internal penalty terms when necessary.  See \cite{cessenat1998application,shu2016discontinuous,cheng2008discontinuous,bona2013conservative} and the reference therein for the application and further development of the UWDG method.  

In this work,  motivated by \cite{tao2020ultraweak, tao2021discontinuous}, we develop a DG method by combining the LDG and UWDG methods and then test this new hybrid scheme for the high-order nonlinear biharmonic Schr\"{o}dinger equations in the form of (\ref{problem}).  To this end, we introduce a second-order spatial derivative as an auxiliary variable to reduce the fourth-order problem to a system which is second-order in space.  This allows us to ensure the stability of the proposed scheme through integration by parts and a suitable choice of numerical fluxes.  Moreover, {compared with the LDG method, only one auxiliary variable is needed within the new approach and this reduces the memory requirement and the computational cost.  Furthermore, compared with the UWDG method, our approach guarantees its stability without requiring internal penalty terms, and this also improves the robustness of this new scheme.}    



The rest of the paper is organized as follows.  We first present the governing equations and their DG formulation in Section \ref{sec_formula}.  Then we study and discuss the stability of our proposed scheme.  Section \ref{sec_error_estimate} introduces some projection operators and derives the optimal $L^2$-error estimates for the semi-discrete scheme through an auxiliary equation and suitable numerical fluxes.  Then we present the fully discrete ultra-weak local DG coupled with the Crank--Nicolson time discretization and prove its mass and Hamiltonian conserving properties in Section \ref{sec_time_dis}.  An arbitrary high order spectral deferred correction time integrator is also presented in this section.  A few numerical experiments in both $1$D and $2$D that demonstrate and verify the theoretical findings are shown in Section \ref{sec_numerical}.  Section \ref{conclusion} draws a brief conclusion of this work.

\section{Semi-discrete DG Formulation and Stability}\label{sec_formula}
We choose $\kappa = 1$ in (\ref{problem}) and consider the following nonlinear biharmonic Schr\"{o}dinger equation  
\begin{equation}\label{schrodinger_4th}
i u_{t}  + \Delta^2u  = - uF'(|u|^2) ,\quad {\bf x}\in \Omega \subseteq \mathbb{R}^d,\quad t\geq 0,,\quad d = 1,2,
\end{equation}
subject to initial and periodic boundary conditions to be specified.  This choice of $\kappa = 1$ is made for demonstration simplicity, while the DG method developed here applies to the cases when $\kappa$ takes other values; moreover, our scheme cooperates with general boundary conditions, while the error estimates in Section \ref{sec_error_estimate} require different technical tools and might become more complicated. We further assume that $F(|u|^2) \geq 0$ to guarantee the positivity of the Hamiltonian associated with (\ref{schrodinger_4th}).

To derive a DG formulation for (\ref{schrodinger_4th}), we denote $w: = \Delta u$ and collect the following second-order system
\begin{equation}\label{system_2nd}
\left\{
\begin{aligned}
iu_t & = -\Delta w - uF'(|u|^2), \\
 w &=  \Delta u.
\end{aligned}
\right.
\end{equation}
Note that any solution to (\ref{system_2nd}) formally satisfies the conservation of mass and Hamiltonian
\[{M(t)=M(0),\ \  H(t)=H(0),\ \ \forall t>0},\]
where the mass and Hamiltonian are given by
\[M(t):= \int_\Omega |u|^2\ d{\bf x},\quad \text{and}\quad H(t) := \int_\Omega |w|^2 + F(|u|^2) \ d{\bf x}.\]


\subsection{Notations}\label{notation}
 Let $\Omega_h$ denote a tessellation of $\Omega$ with shape-regular elements $K$ and denote $\Gamma_h = \cup_{K\in\Omega_h}\partial K$ to be the union of the boundary faces of elements $K\in\Omega_h$. We further denote the diameter of $K$ by $h_K$ and $h = \max_{K} h_K$. For example, $K$ is an interval when $d = 1$; and a rectangle for Cartesian meshes when $d = 2$. On each element $K$, we approximate $(u,w)$ by $(u_h, w_h)$, each belonging to the following space 
\[V_h^q: = \{v_h({\bf x}, t), v_h({\bf x}, t)\in\mathcal{Q}^q(K), q\geq 1, {\bf x}\in K, t\geq 0, \forall K\in\Omega_h\},\]
where $\mathcal{Q}^q(K)$ is the space of tensor product of complex polynomials of degree at most $q\geq 1$ in each variable defined on $K$.

Specifically, in the one dimensional case, we have $\Omega_h = \cup_{j=1}^N[x_{j-\frac{1}{2}}, x_{j+\frac{1}{2}}]$, that is, $K = I_j = (x_{j-\frac{1}{2}}, x_{j+\frac{1}{2}})$, $j = 1,2,\cdots, N$. For any $\eta \in V_h^q$, we denote $\eta^-$ and $\eta^+$ to be the right and left limit values of $\eta$ at $x_{j+\frac{1}{2}}$, respectively. Let ${\bf n}_L = -1$ and ${\bf n}_R = 1$ be the outward unit normal to the left and right end of each sub-cell $I_j$, respectively, we then have the average and the jump at $x_{j+\frac{1}{2}}$ as follows
\[\{\eta\}_{j+\frac{1}{2}} = \frac{1}{2}(v_{j+\frac{1}{2}}^+ + v_{j+\frac{1}{2}}^-),\quad [\eta]_{j+\frac{1}{2}} = v_{j+\frac{1}{2}}^-{\bf n}_L + v_{j+\frac{1}{2}}^+{\bf n}_R.\]
In the two dimensional case, we have $\Omega_h = \cup_{kj}[x_{k-\frac{1}{2}}, x_{k+\frac{1}{2}}]\times[y_{j-\frac{1}{2}}, y_{j+\frac{1}{2}}]$, $k = 1,\cdots,N_x$, $j = 1,\cdots, N_y$. For this situation, $K =I_k\times I_j= (x_{k-\frac{1}{2}}, x_{k+\frac{1}{2}})\times(y_{j-\frac{1}{2}}, y_{j+\frac{1}{2}})$. Let $e$ be an interior edge shared by the ``left" and ``right" elements denoted by $K_L$ and $K_R$. The ``left" and ``right" can be uniquely defined for each $e$ according to any fixed rule. In this work, considering the rectangle for Cartesian meshes, we refer to left and bottom directions as ``left'' and right and top directions as ``right''.  Let $\zeta$ be a continuously differentiable scalar function on $K_L$ and $K_R$, and $\zeta^- :=(\zeta|_{K_L})|_e$, $\zeta^+ := (\zeta|_{K_R})|_e$ be the left and right traces, respectively. We then introduce the conventional notations for averages and jumps
\[\{\zeta\} = \frac{1}{2}(\zeta^- + \zeta^+), [\zeta] = \zeta^-{\bf n}_L + \zeta^+{\bf n}_R, \{\nabla \zeta\} = \frac{1}{2}(\nabla \zeta^- + \nabla \zeta^+), [\nabla \zeta] = \nabla \zeta^-\cdot {\bf n}_L + \nabla \zeta^+\cdot{\bf n}_R,\]
where ${\bf n}_L$ and ${\bf n}_R$ are the outward unit normals to $\partial K_L$ and $\partial K_R$, respectively. 


\subsection{Semi-discrete DG formulation}\label{semi_form}
To seek an approximation of the second-order system, on each element $K\in\Omega_h$ we choose test functions $\phi, \psi \in V_h^q$ and apply them to the first and the second equation in (\ref{system_2nd}), respectively.  An integration by parts leads us to the following integral system
\begin{equation}\label{DG_scheme1}
\int_{K}  i u_{ht}\phi + w_h\Delta \phi + u_hF'(|u_h|^2)\phi\ d{\bf x} 
=  \int_{\partial K} -\widetilde{\nabla w_h}\cdot{\bf n}\phi + \widetilde{w_h}\nabla \phi\cdot{\bf n}\  dS
\end{equation}
and
\begin{equation}\label{DG_scheme2}
\int_{K}w_h\psi - u_h\Delta\psi\ d{\bf x} 
= \int_{\partial K}\widehat{\nabla u_h}\cdot{\bf n}\psi -  \widehat{u_h}\nabla \psi\cdot{\bf n}\ dS,
\end{equation}
where $\widetilde{\nabla w_{h}}$, $\widetilde{w_h}$, $\widehat{\nabla u_{h}}$ and $\widehat{u_h}$ are numerical fluxes at element boundaries, and $\bf n$ represents the outward unit normal to $\partial K$.  {Note that $q = 0$ is not an option as it yields inconsistency in the scheme}. 
To complete the DG formulations, we specify the numerical fluxes $\widetilde{\nabla w_{h}}$, $\widetilde{w_h}$, $\widehat{\nabla u_{h}}$ and $\widehat{u_h}$ at the element boundaries by choosing
\begin{equation}\label{flux1}
\widehat{u_h} = \alpha_1 u_{h}^+ + (1-\alpha_1) u_{h}^-, \  \widetilde{\nabla w_{h}} = (1-\alpha_1)\nabla w_{h}^+ + \alpha_1 \nabla w_{h}^-,
\end{equation}
and
\begin{equation}\label{flux2}
\widehat{\nabla u_{h}} = \alpha_2 \nabla u_{h}^+ + (1-\alpha_2) \nabla u_{h}^-,\ \widetilde{w_h} = (1-\alpha_2) w_{h}^+ + \alpha_2 w_{h}^-, 
\end{equation}
where $0\leq \alpha_1, \alpha_2 \leq 1$.  In particular, when $\alpha_1 = 0 \ \mbox{or} \ 1$, $\alpha_2 = 0 \ \mbox{or} \ 1$, we have the alternating fluxes that are also compatible with the error estimates in Sections \ref{initial_error} and \ref{error_sec}. Denote
\begin{align}
\mathcal{B}_K^1(w_h, \phi) &:= \int_{K} w_h\Delta\phi \ d{\bf x} + \int_{\partial K} \widetilde{\nabla w_{h}}\cdot {\bf n} \phi - \widetilde{w_h}\nabla \phi\cdot{\bf n}\ dS,\label{B1}\\
\mathcal{B}_K^2(u_h, \psi) &:= \int_{K}  - u_h\Delta\psi\ d{\bf x} - \int_{\partial K} \widehat{\nabla u_{h}}\cdot{\bf n}\psi - \widehat{u_h}\nabla \psi\cdot{\bf n}\ dS.\label{B2}
\end{align}
We can further simplify the DG scheme (\ref{DG_scheme1})--(\ref{DG_scheme2}) to 
\begin{align}
    \int_K iu_{ht}\phi + u_hF'(|u_h|^2)\phi\ d{\bf x} + \mathcal{B}_K^1(w_h,\phi) &= 0,\label{DG_scheme1_v2}\\
    \int_K w_h\psi\ d{\bf x} + \mathcal{B}_K^2(u_h,\psi) &= 0.\label{DG_scheme2_v2}
\end{align}
The notations $\mathcal{B}_K^{1}$ and $\mathcal{B}_K^{2}$ will be used frequently in the rest of the content to simplify the presentation.


\subsection{Energy conservation}\label{sec:conservation}
We now prove that the proposed scheme formulated in (\ref{DG_scheme1_v2})-(\ref{DG_scheme2_v2}) conserves both the semi-discrete mass and the semi-discrete Hamiltonian.  In particular, we show that they imply the stability of the scheme as follows.

\begin{truth}\label{them1}
	(Stability) The solution of the DG scheme (\ref{DG_scheme1_v2})-(\ref{DG_scheme2_v2}) with numerical fluxes (\ref{flux1})-(\ref{flux2}) satisfies the following conservation laws
	\begin{align}
	M_h(t) &= \int_{\Omega_h} |u_h|^2\ d{\bf x} = M_h(0),\label{M}\\
	H_h(t) &= \int_{\Omega_h} |w_h|^2 + F(|u_h|^2) \ d{\bf x} = H_h(0).\label{E}
	\end{align}
\end{truth}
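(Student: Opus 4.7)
My plan is to prove both conservation laws by testing the DG weak forms against judiciously chosen functions (complex conjugates of the unknowns, and of their time derivatives), exploiting the factor $i$ in front of $u_{ht}$ to convert real parts of symmetric pairings into time derivatives of $M_h$ and $H_h$. In each case I would then add the conjugate identity, which kills the purely imaginary pieces, and rely on the alternating flux choice (\ref{fluxa}) together with the periodic boundary condition to make all cell-interface contributions telescope and cancel.

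For mass conservation (\ref{M}) I would first take $\phi=\bar u_h$ in $\mathcal{A}_1(u_h,w_h,\phi)+\int_I u_h F'(|u_h|^2)\phi\,dx=0$, producing a relation whose nonlinear part is the real quantity $\int_I|u_h|^2 F'(|u_h|^2)\,dx$. Adding this identity to its complex conjugate cancels the nonlinear piece and assembles the time-derivative contribution into $i\,\frac{d}{dt}M_h$, leaving the requirement that $\int_I w_h\bar u_{h,xx}\,dx+\mathcal{F}_1$ be real (where $\mathcal{F}_1$ denotes the flux jumps from $\mathcal{A}_1$). To establish this I would test $\mathcal{A}_2(u_h,w_h,\psi)=0$ with $\psi=\bar w_h$ and add its conjugate, then integrate by parts cell by cell to rewrite the volume integrals as interface sums; the alternating fluxes are arranged exactly so that the leftover jump terms cancel $\mathcal{F}_1-\overline{\mathcal{F}_1}$, yielding $\frac{d}{dt}M_h=0$.

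For Hamiltonian conservation (\ref{E}) I would instead test $\mathcal{A}_1$ with $\phi=\bar u_{ht}$. The inertial contribution $i\int_I|u_{ht}|^2\,dx$ is purely imaginary and vanishes after taking real parts, while the nonlinear pairing, combined with its conjugate, produces exactly $\frac{d}{dt}\int_I F(|u_h|^2)\,dx$ since $F'(|u_h|^2)\,\frac{d}{dt}|u_h|^2=\frac{d}{dt}F(|u_h|^2)$. To obtain $\frac{d}{dt}\int_I|w_h|^2\,dx$ from the coupling with $w_h$, I would differentiate $\mathcal{A}_2=0$ in time (this is legal since the equation is linear in $u_h,w_h$), test with $\psi=\bar w_h$, add the conjugate, and match the resulting cross-terms against the $w_h$-coupling in the first step. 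Another application of the alternating-flux cancellation finishes the proof.

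The main obstacle in both parts is the combinatorial bookkeeping of cell-interface contributions: every cancellation rests on pairing the boundary terms that integration by parts produces with those hidden in $\widetilde{w}$, $\widetilde{w_x}$, $\widehat{u}$, $\widehat{u_x}$ and their conjugates. The alternating pattern (\ref{fluxa}) is designed precisely so that a ``$-$''-side trace from $\widetilde{w}$ or $\widetilde{w_x}$ pairs with the ``$+$''-side trace from $\widehat{u}$ or $\widehat{u_x}$, yielding interface sums that telescope around each $x_{j+1/2}$ and collapse under the periodic boundary condition. Verifying this pairing term by term, while not conceptually difficult, is the only nontrivial calculation in the argument.
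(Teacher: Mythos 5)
Your proposal follows essentially the same route as the paper: test the two weak forms with $u_h^\ast$ and $w_h^\ast$ (and, for the Hamiltonian, test $\mathcal{A}_1$ with $u_{ht}^\ast$ and the time-differentiated second equation with $w_h^\ast$), combine each identity with its complex conjugate so that the real nonlinear term (for the mass) and the purely imaginary inertial term (for the Hamiltonian) drop out, leaving the time derivatives of $M_h$ and $H_h$ plus interface sums. The only step to upgrade is the last one: the theorem is stated for the whole family (\ref{flux1})--(\ref{flux2}) with $\alpha_1,\alpha_2\in[0,1]$, and even for the alternating choice (\ref{fluxa}) the interface terms do not literally telescope to zero; the paper instead invokes the jump identity $[ab]=\big((1-\alpha)a^-+\alpha a^+\big)[b]+\big((1-\alpha)b^+ +\alpha b^-\big)[a]$ to show that the combined jump sums are real (for the mass) or purely imaginary (for the Hamiltonian), so they vanish upon taking the imaginary, respectively real, part.
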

\begin{proof}
Let us first prove the mass conservation.  To this end, we choose $\phi = u_h^\ast$ in (\ref{DG_scheme1_v2}) and $\psi = w_h^\ast$ in (\ref{DG_scheme2_v2}), where ``$\ast$'' represents the complex conjugate.  Summing (\ref{DG_scheme1_v2}) multiplied by $-i$ and (\ref{DG_scheme2_v2}) multiplied by $i$ up over all elements $K$, we obtain
	\begin{align}
		-i\sum_K\mathcal{B}^1_K(w_h, u_h^\ast) - i\int_{\Omega_h} iu_{ht}u_h^\ast +  |u_h|^2F'(|u_h|^2) \ d{\bf x} &= 0,\label{eq1}\\
		i\sum_K\mathcal{B}^2_K(u_h, w_h^\ast) + \int_{\Omega_h} iw_hw_h^\ast\ d{\bf x} & = 0,\label{eq2}
	\end{align}
	where $\mathcal{B}_K^1$ and $\mathcal{B}_K^2$ are defined in (\ref{B1}) and (\ref{B2}), respectively. Let $F$ be the interelement boundary face shared by two neighboring elements ($F$ is the boundary point $x_{j+\frac{1}{2}}$ when $d = 1$; and the interior edge $e$ when $d = 2$), because of periodic boundary condition, we apply integration by parts and find
\begin{equation*}\label{A11}
-i\sum_K\mathcal{B}^1_K(w_h, u_h^\ast) = \int_{\Omega_h} i\nabla w_{h}\cdot\nabla u_{h}^\ast \ d{\bf x} -i \sum_F \int_{ F}\widetilde{\nabla w_{h}}\cdot[u_h^\ast]  - \widetilde{w_h}[\nabla u_{h}^\ast] +[w_h\nabla u_{h}^*]\ dS ,
\end{equation*}
and
\begin{equation*}\label{A21}
i\sum_K\mathcal{B}^2_K(u_h, w_h^\ast) = \int_{\Omega_h}i\nabla u_{h}\cdot\nabla w_{h}^\ast\ d{\bf x} -i\sum_F\int_{F} \widehat{\nabla u_{h}}\cdot[w_h^\ast] - \widehat{u_h}[\nabla w_{h}^\ast] + [u_h\nabla w_{h}^\ast]\ dS.
 \end{equation*}

Then, computing the complex conjugate of (\ref{eq1})-(\ref{eq2}) and adding them to the resulting two equations, we arrive at
\begin{align}\label{eq5}
0 =& i\int_{\Omega_h} |u_h|^2F'(|u_h|^2) \ d{\bf x} - i\int_{\Omega_h} |u_h|^2F'(|u_h|^2) \ d{\bf x}\nonumber\\
=&\frac{d}{dt}\int_{\Omega_h} |u_h|^2 dx-i\sum_K \mathcal{B}_K^1(w_h,u_h^\ast)-\mathcal{B}_K^1(w_h^\ast,u_h)-\mathcal{B}_K^2(u_h,w_h^\ast)+\mathcal{B}_K^2(u_h^\ast,w_h) \\
=& \frac{d}{dt}\int_{\Omega_h} |u_h|^2 dx + 2\mbox{Im}\sum_F\int_F \widetilde{\nabla w_{h}}\cdot[u_h^\ast] - \widetilde{w_h}[\nabla u_{h}^\ast] +[w_h \nabla u_{h}^*]\ dS \nonumber\\
&+ 2\mbox{Im}\sum_F\int_F\widehat{\nabla u_{h}}\cdot[w_h^\ast] - \widehat{u_h}[\nabla w_{h}^\ast] + [u_h\nabla w_{h}^\ast]\ dS. \nonumber 
\end{align}
  Further, by using the fact
	\begin{equation}\label{fact1}
	[ab] = \big((1-\alpha) a^- + \alpha a^+ \big) [b] + \big((1-\alpha) b^+ + \alpha b^- \big)[a],\ \ \alpha \in [0,1],
	\end{equation}
	we get
\begin{align}\label{eq3}
\mathcal{K}_1:
=& \sum_F\int_F [w_h\nabla u_{h}^*]+\widehat{\nabla u_{h}}\cdot[w_h^\ast] - \widetilde{w_h}[\nabla u_{h}^\ast]  \ dS\ \nonumber\\
=&\sum_F \int_F\big(\alpha_2w_{h}^- + (1-\alpha_2)w_{h}^+\big)[\nabla u_{h}^\ast] + \big((1-\alpha_2)\nabla u_{h}^{\ast,-} + \alpha_2\nabla u_{h}^{\ast,+}\big)\cdot[w_h] \nonumber\\
 &+\big(\alpha_2\nabla u_{h}^+ + (1-\alpha_2)\nabla u_{h}^-\big)\cdot[w_h^\ast]- \big((1-\alpha_2)w_{h}^+ + \alpha_2 w_{h}^-\big)[\nabla u_{h}^\ast]\ dS\nonumber\\
=& \sum_F\int_F 2\mbox{Re}\Big(\big((1-\alpha_2)\nabla u_{h}^- + \alpha_2\nabla u_{h}^+\big)\cdot [w_h^\ast]\Big) \ dS,
\end{align}
	and
	\begin{align}\label{eq4}
	\mathcal{K}_2 := &\sum_F \int_F [u_h\nabla w_{h}^\ast]+\widetilde{\nabla w_{h}}\cdot[u_h^\ast] - \widehat{u_h}[\nabla w_{h}^\ast]\ dS\nonumber\\
	=& \sum_F \int_F 2\mbox{Re}\Big(\big((1-\alpha_1)\nabla w_{h}^+ + \alpha_1\nabla w_{h}^-\big)\cdot[u_h^\ast] \Big)\ dS.
	\end{align}
These identities indicate both $\mathcal{K}_1$ and $\mathcal{K}_2$ are real numbers. Plugging (\ref{eq3})-(\ref{eq4}) into (\ref{eq5}) leads to
	\[\frac{d}{dt}\int_{\Omega_h} |u_h|^2\ d{\bf x} = 0,\]
	which yields the mass conservation (\ref{M}). 
	
To prove the Hamiltonian conservation (\ref{E}), we differentiate (\ref{DG_scheme2_v2}) against time and obtain
	\begin{equation}\label{DG_scheme2t}
	\int_{K}w_{ht}\psi \d{\bf x} + \mathcal{B}_K^2(u_{ht}, \psi) = 0,
	\end{equation}
where $\mathcal{B}^2_K$ is defined in (\ref{B2}). Now, by choosing $\phi = u_{ht}^\ast$ in (\ref{DG_scheme1_v2}), $\psi = w_h^\ast$ in (\ref{DG_scheme2t}) and summing them over all elements $K$, respectively, we get
\begin{align}
		\sum_K\mathcal{B}^1_K(w_h,u_{ht}^\ast) + \int_{\Omega_h} iu_{ht}u_{ht}^\ast +  u_hF'(|u_h|^2)u_{ht}^\ast\ d{\bf x} &= 0,\label{A13}\\
		\sum_K\mathcal{B}^2_K(u_{ht}, w_h^\ast) + \int_{\Omega_h} w_{ht}w_h^\ast\ d{\bf x} & = 0,\label{A23}
\end{align} 
where $\mathcal{B}^1_K$ is defined in (\ref{B1}).  Moreover, we integrate by parts and have
\begin{equation*}\label{A12}
\sum_K\mathcal{B}^1_K(w_h, u_{ht}^\ast)= \int_{\Omega_h}  - \nabla w_{h}\cdot\nabla u_{ht}^\ast \ d{\bf x} + \sum_F\int_F \widetilde{\nabla w_{h}}\cdot[u_{ht}^\ast] - \widetilde{w_h}[\nabla u_{ht}^\ast] +[w_h\nabla u_{ht}^\ast]\ dS, 
\end{equation*}
and 
\begin{equation*}\label{A22}
\sum_K\mathcal{B}^2_K(u_{ht}, w_h^\ast)= \int_{\Omega_h} \nabla u_{ht}\cdot\nabla w_{h}^\ast\ d{\bf x}
-\sum_F\int_F \widehat{\nabla u_{ht}}\cdot[w_h^\ast] - \widehat{u_{ht}}[\nabla w_{h}^\ast] + [u_{ht}\nabla w_{h}^\ast]\ dS
\end{equation*}


Then, computing the complex conjugate of (\ref{A13})-(\ref{A23}) and adding them with the resulting two equations yields
\begin{align}\label{dG}
\frac{d}{dt}\int_{\Omega_h} F(|u_h|^2)+ |w_h|^2\ d{\bf x} 
=& -\sum_K\mathcal{B}_K^1(w_h,u_{ht}^\ast) + \mathcal{B}_K^1(w_h^\ast,u_{ht}) + \mathcal{B}_K^2(u_{ht}, w_h^\ast) + \mathcal{B}_K^2(u_{ht}^\ast, w_h)\nonumber\\ 
= &- 2\mbox{Re}\Big(\sum_F\int_F  \widetilde{\nabla w_{h}}\cdot[u_{ht}^\ast] - \widetilde{w_h}[\nabla u_{ht}^\ast] +[w_h\nabla u_{ht}^\ast] \ dS\Big)\nonumber\\
&+ 2\mbox{Re}\Big(\sum_F\int_F\widehat{\nabla u_{ht}}\cdot[w_h^\ast] - \widehat{u_{ht}}[\nabla w_{h}^\ast] + [u_{ht}\nabla w_{h}^\ast]\ dS\Big).    
\end{align}
Using identity (\ref{fact1}) and the numerical fluxes (\ref{flux1})-(\ref{flux2}), we have
\begin{align}\label{eq6}
\mathcal{K}_3:=&\sum_F\int_F [w_h\nabla u_{ht}^\ast]-\widetilde{w_h}[\nabla u_{ht}^\ast] - \widehat{\nabla u_{ht}}\cdot[w_h^\ast]\ dS\nonumber\\
=& -2i\sum_F \int_F\mbox{Im}\Big(\big((1-\alpha_2)\nabla u_{ht}^- + \alpha_2\nabla u_{ht}^+\big)\cdot[w_h^\ast] \Big)\ dS,
\end{align}
and
\begin{align}\label{eq7}
K_4 :=& \sum_F \int_F-[u_{ht}\nabla w_{h}^\ast]+\widetilde{\nabla w_{h}}\cdot[u_{ht}^\ast] + \widehat{u_{ht}}[\nabla w_{h}^\ast] \ dS\nonumber\\
	=& 2i\sum_F\int_F \mbox{Im}\Big(\big((1-\alpha_1)\nabla w_{h}^+ + \alpha_1\nabla w_{h}^-\big)\cdot[u_{ht}^\ast] \Big)\ dS.
\end{align}
Again, these identities indicate both $\mathcal{K}_3$ and $\mathcal{K}_4$ are pure imaginary numbers.  Finally, substituting (\ref{eq6}) and (\ref{eq7}) into (\ref{dG}) leads us to 
\[\frac{d}{dt}\int_{\Omega_h} F(|u_h|^2)+ |w_h|^2\ d{\bf x} = 0,\]
and this establishes the Hamiltonian conservation (\ref{E}).
\end{proof}

\section{Error Estimates}\label{sec_error_estimate}
In this section, We proceed to derive error estimates of the DG scheme (\ref{DG_scheme1_v2})-(\ref{DG_scheme2_v2}) for the nonlinear biharmonic Schr\"{o}dinger equation (\ref{schrodinger_4th}). For simplicity of analysis, we only consider the following alternating fluxes with $\alpha_1 = \alpha_2 = 1$ in (\ref{flux1}) and (\ref{flux2}), that is,
\begin{equation}\label{fluxa}
\widehat{u_h} = u_{h}^+,\ \ \widetilde{\nabla w_{h}} =  \nabla w_{h}^-, \ \ \widehat{\nabla u_{h}} =  \nabla u_{h}^+ ,\ \ \widetilde{w_h} = w_{h}^-.
\end{equation}
However, the error analysis can be easily generated to other types of alternating fluxes. In Section \ref{projection}, we review some projections and inequalities that are essential for our proof.  Section \ref{prior} presents the 
{\it a priori} error estimates needed to evaluate the nonlinear terms. The error estimates in the $L^2$-norm are given from Section \ref{initial_error} to Section \ref{error_sec}.  In the estimates, we denote by $C$ a generic positive constant which is independent of $h$ but may vary from line to line.

\subsection{Projections}\label{projection}
For the one dimensional case $d = 1$, we define the Gauss--Radau projections $P^{\pm}_h$ into $V_h^q$ such that for any $u\in H^{q+1}(\Omega_h)$ and $q\geq 2$ 
\begin{align}
&\int_{I_j} (P^{\pm}_h u - u)v_h\ dx = 0,\ \ \forall v_h\in \mathcal{P}^{q-2}(I_j),\nonumber\\
& P_h^{+}u(x_{j-\frac{1}{2}}^+) = u(x_{j-\frac{1}{2}}), \ \ (P_h^{+}u)_x(x_{j-\frac{1}{2}}^+) = u_x(x_{j-\frac{1}{2}}),\label{proj1}\\
& P_h^{-}u(x_{j+\frac{1}{2}}^-) = u(x_{j+\frac{1}{2}}), \ \ (P_h^{-}u)_x(x_{j+\frac{1}{2}}^-) = u_x(x_{j+\frac{1}{2}}).\label{proj2}
\end{align}
When $q = 1$, the Gauss--Radau projections are defined only by (\ref{proj1}) and (\ref{proj2}). And for the two dimensional case $d = 2$, we define the Gauss--Radau projections to be
\[\Pi_h^{\pm} u := (P_{hx}^{\pm}\otimes P_{hy}^{\pm}) u,\]
where the subscripts $x,y$ indicate the application of the one-dimensional operators $P^{\pm}_h$ with respect to the $x$-direction and the $y$-direction, respectively.

For each projection, the following inequality holds (see e.g., \cite{ciarlet2002finite}) for any $u\in H^{k+1}(\Omega_h)$
\begin{equation}\label{projrel}
\|u - Q_hu\|_{L^2(\Omega_h)} + h\|u - Q_hu\|_{L^\infty(\Omega_h)} + h^{\frac{1}{2}}\|u - Q_hu\|_{L^2(\Gamma_h)} \leq Ch^{q+1},
\end{equation}
where $Q_h = P^{\pm}_h, \Pi_h^{\pm}$. 

\subsection{A priori error estimate}\label{prior} 
Let us denote
\begin{align*}
e_u &= u - u_h = u - \mathbb{P}_h^{+} u + \mathbb{P}_h^{+} u - u_h =: \eta_u + \xi_u,\\
e_w &= w - w_h = w - \mathbb{P}_h^{-} w + \mathbb{P}_h^{-} w - w_h =: \eta_w + \xi_w,\\
e_{ut} &= u_t - u_{ht} = u_t - \mathbb{P}_h^{+} u_t + \mathbb{P}_h^{+} u_t - u_{ht} =: \eta_{ut} + \xi_{ut},
\end{align*}
where $\mathbb{P}_h^{\pm} = P_h^{\pm}$ when $d = 1$, and $\mathbb{P}_h^{\pm} = \Pi_h^{\pm}$ when $d = 2$. To deal with the nonlinearity in problem (\ref{schrodinger_4th}), we make an a priori error estimate assumption 
\begin{equation}\label{prior3}
\|e_u\|_{L^2(K)} +\|e_{ut}\|_{L^2(K)} \leq h, 
\end{equation}
which will be verified in Section \ref{verification_priori}.
Further by the inverse inequality, we have
\begin{equation}\label{prior2}
\|e_u\|_{L^\infty(K)} +\|e_{ut}\|_{L^\infty(K)} \leq C,
\end{equation}
where the constant $C$ depends on the exact solution $u$ and the total time $T$, but not $h$. To obtain an optimal error estimate in the two dimensional case $d = 2$, we also need some superconvergence results of $\mathcal{B}^1_K$ and $\mathcal{B}^2_K$.
\begin{lemma}\label{lemmaB1}
\cite{tao2020ultraweak} Let $\mathcal{B}^1_K$ and $\mathcal{B}^2_K$ be defined by (\ref{B1}) and (\ref{B2}). We then have for $q\geq 1$
\begin{equation*}\label{lemma1A}
    \mathcal{B}^1_K(\eta_w, \phi) =0, \quad\mathcal{B}^2_K(\eta_u, \psi) = 0
\end{equation*}
for all $u, w\in \mathcal{P}^{q+2}(K)$, and $\phi, \psi \in \mathcal{Q}^k(K)$.
\end{lemma}
\begin{lemma}\label{lemmaB2}
\cite{tao2020ultraweak} Let $\mathcal{B}_1^K$ and $\mathcal{B}_2^K$ be defined by (\ref{B1}) and (\ref{B2}). We then have 
\begin{equation*}\label{lemma2A}
\begin{aligned}
&|\mathcal{B}^1_K(\eta_w, \phi)| \leq Ch^{q+2}\|w\|_{W^{2q+4,\infty}(K)}\|\phi\|_{L^2(K)},\\ &|\mathcal{B}^2_K(\eta_u, \psi)| \leq Ch^{q+2}\|u\|_{W^{2q+4,\infty}(K)}\|\psi\|_{L^2(K)},
\end{aligned}
\end{equation*}
where $\phi, \psi \in \mathcal{Q}^k(K)$, the constant $C$ is independent of $h$.
\end{lemma}


\subsection{Error estimates for initial conditions}\label{initial_error}
This section is devoted to the analysis of the initial error estimates, which plays an essential role in the proof of optimal error estimates of the DG scheme (\ref{DG_scheme1_v2})-(\ref{DG_scheme2_v2}). Motivated by \cite{li2020analysis,zhang2021local}, we choose an initial approximation by the solution of a linear steady-state problem as in the following lemma.

\begin{lemma}\label{lemma3}
Suppose that the numerical initial condition of the DG scheme (\ref{DG_scheme1_v2})-(\ref{DG_scheme2_v2}) is chosen as the DG approximation with numerical fluxes (\ref{fluxa}) to a linear steady-state problem
	\begin{equation}\label{steady_state}
	iu + \Delta^2 u = iu_0 + \Delta^2 u_{0},
	\end{equation}
	where $u_0$ is the initial value of $u$, and periodic boundary conditions are considered. Further, denote $w = \Delta u$, then the DG approximation for (\ref{steady_state}) is given as
	\begin{equation}\label{DG_schemes1}
	\int_{K}  i u_h\phi + w_h\Delta \phi - (iu_0 + \Delta^2u_{0})\phi\ d{\bf x}
	=  \int_{\partial K}-\widetilde{\nabla w_{h}}\cdot{\bf n} \phi  + \widetilde{w_h}\nabla \phi\cdot{\bf n}\ dS,
	\end{equation}
	and
	\begin{equation}\label{DG_schemes2}
	\int_{K}w_h\psi - u_h\Delta \psi\ d{\bf x}\ d{\bf x}
	= \int_{\partial K}\widehat{\nabla u_{h}}\psi - \widehat{u_h}\nabla \psi\ dS,
	\end{equation}
	for all $\phi, \psi \in V_h^q, q\geq1$.
	We then have the following optimal initial error estimates for time-dependent nonlinear Schr\"{o}dinger equation (\ref{schrodinger_4th})
	\begin{equation*}\label{esti_ic}
	\|\xi_u(x,0)\|_{L^2(\Omega_h)} +\|\xi_{ut}(x,0)\|_{L^2(\Omega_h)} +  \|\xi_w(x,0)\|_{L^2(\Omega_h)} \leq Ch^{q+1},
	\end{equation*}
	where $C$ is a positive constant depends on $q$, $\|F'\|_{W^{2,\infty}(\Omega_h)}$ and $\|u\|_{L^\infty(\Omega_h)}$, but not $h$.
\end{lemma}

\begin{proof}
Let us consider the DG approximation (\ref{DG_schemes1})-(\ref{DG_schemes2}) for the numerical initial condition first. Then we have the error identities 
\begin{align}
&\int_{K}  i e_{u}\phi \ d{\bf x} +\mathcal{B}_K^1(e_w,\phi)  = 0,\label{error_s1}\\
&\int_{K} e_w\psi \ d{\bf x} +  \mathcal{B}_K^2(e_u, \psi) = 0,\label{error_s2}
\end{align}
where $e_u = u(x,0) - u_h(x,0)$, $e_w = w(x,0) - w_h(x,0)$, and $\mathcal{B}_K^1, \mathcal{B}_K^2$ are defined in (\ref{B1}) and (\ref{B2}), respectively.
 Multiplying (\ref{error_s1}) by $-i$, (\ref{error_s2}) by $i$, choosing $\phi = \xi_u^\ast$, $\psi = \xi_w^\ast$, and taking the complex conjugate of the resulting two equations, then summing them over all elements $K$ yields
	\begin{multline}\label{ini1}
	2\sum_K \int_{K} |\xi_u|^2  +\mbox{Re}(\eta_u\xi_u^\ast)-  \mbox{Im}(\eta_w\xi_w^\ast)\ d{\bf x} \\
	+ i\sum_K-\mathcal{B}_K^1(\xi_w,\xi_u^\ast)+\mathcal{B}_K^2(\xi_u,\xi_w^\ast) + \mathcal{B}_K^1(\xi_w^\ast,\xi_u) - \mathcal{B}_K^2(\xi_u^\ast,\xi_w) \\
	+ i\sum_K-\mathcal{B}_K^1(\eta_w,\xi_u^\ast)+\mathcal{B}_K^2(\eta_u,\xi_w^\ast) + \mathcal{B}_K^1(\eta_w^\ast,\xi_u) - \mathcal{B}_K^2(\eta_u^\ast,\xi_w) = 0,
	\end{multline}
where we have used the relations $e_u = \xi_u + \eta_u$ and $e_w = \xi_w + \eta_w$. By a similar analysis as in the derivation of mass conservation (\ref{M}) in Section \ref{sec:conservation}, one has
\[\sum_K-\mathcal{B}_K^1(\xi_w,\xi_u^\ast)+\mathcal{B}_K^2(\xi_u,\xi_w^\ast) + \mathcal{B}_K^1(\xi_w^\ast,\xi_u) - \mathcal{B}_K^2(\xi_u^\ast,\xi_w) = 0.\]
Further combining the property of projection operators (\ref{projrel}) and numerical fluxes (\ref{fluxa}), one reduces (\ref{ini1}) into
	\[\left\{
	\begin{aligned}
2\int_{\Omega_h} |\xi_u|^2  +\mbox{Re}(\eta_u\xi_u^\ast)-  \mbox{Im}(\eta_w\xi_w^\ast)\ d{\bf x} &= 0,\quad d = 1,\\
2\int_{\Omega_h} |\xi_u|^2  +\mbox{Re}(\eta_u\xi_u^\ast)-  \mbox{Im}(\eta_w\xi_w^\ast)\ d{\bf x} &\leq  Ch^{q+2}(\|\xi_u({\bf x},0)\|_{L^2(\Omega_h)} + \|\xi_w({\bf x},0)\|_{L^2(\Omega_h)}), \quad d = 2,
\end{aligned}
\right.\]
where we have also used Lemma \ref{lemmaB1} and Lemma \ref{lemmaB2} for the derivation of the case $d = 2$. This yields
	\begin{equation}\label{s_est_1}
	\|\xi_u({\bf x},0)\|_{L^2(\Omega_h)}^2 \leq Ch^{q+1}\left(\|\xi_u({\bf x},0)\|_{L^2(\Omega_h)} + \|\xi_w({\bf x},0)\|_{L^2(\Omega_h)}\right).
	\end{equation}
	Similarly, choosing $\phi = \xi_u^\ast$ in (\ref{error_s1}), $\psi = \xi_w^\ast$ in (\ref{error_s2}), and taking the complex conjugate of the resulting two equations, then summing them over all elements $K$ gives rise to
	\begin{multline*}\label{ini2}
	2\sum_K \int_{K} |\xi_w|^2  -\mbox{Im}(\eta_u\xi_u^\ast)+  \mbox{Re}(\eta_w\xi_w^\ast)\ d{\bf x} \\
	+ \sum_K\mathcal{B}_K^1(\xi_w,\xi_u^\ast)+\mathcal{B}_K^2(\xi_u,\xi_w^\ast) + \mathcal{B}_K^1(\xi_w^\ast,\xi_u) + \mathcal{B}_K^2(\xi_u^\ast,\xi_w) \\
	+ \sum_K\mathcal{B}_K^1(\eta_w,\xi_u^\ast)+\mathcal{B}_K^2(\eta_u,\xi_w^\ast) + \mathcal{B}_K^1(\eta_w^\ast,\xi_u) + \mathcal{B}_K^2(\eta_u^\ast,\xi_w) = 0.
	\end{multline*}
	By using the property of projection operators (\ref{projrel}), numerical fluxes (\ref{fluxa}), a similar analysis as in the derivation of the Hamiltonian conservation (\ref{E}) in Section \ref{sec:conservation}, and the Lemma \ref{lemmaB1}-- \ref{lemmaB2}, we obtain
	
	\begin{equation}\label{s_est_2}
	\|\xi_w({\bf x},0)\|_{L^2(\Omega_h)}^2 \leq Ch^{q+1}\left(\|\xi_u({\bf x},0)\|_{L^2(\Omega_h)} + \|\xi_w({\bf x},0)\|_{L^2(\Omega_h)}\right).
	\end{equation}
	Combing (\ref{s_est_1})-(\ref{s_est_2}) and using Young's inequality, we arrive at
	\begin{equation}\label{s_est_3}
	\|\xi_u({\bf x},0)\|_{L^2(\Omega_h)} + \|\xi_w({\bf x},0)\|_{L^2(\Omega_h)} \leq Ch^{q+1}.
	\end{equation}

{Next, we estimate $\|\xi_{ut}({\bf x},0)\|_{L^2(\Omega_h)}$ by using the relation between the time dependent equation (\ref{DG_scheme1_v2}) and steady state equation (\ref{DG_schemes1}).  To this end, we start from the initial error equation for the time-dependent problem (\ref{DG_scheme1_v2})}
	\[\int_{K} ie_{ut}\phi\ d{\bf x} + \mathcal{B}_K^1(e_w,\phi) + \int_{K} u F'(|u|^2)\phi - u_hF'(|u_h|^2)\phi\ d{\bf x} = 0,\]
	where $\mathcal{B}_K^1$ is defined in (\ref{B1}).
	Subtracting this identity from (\ref{error_s1}) gives
	\[\int_{K} ie_{ut}\phi - ie_{u}\phi + u F'(|u|^2)\phi - u_hF'(|u_h|^2)\phi\ d{\bf x} = 0.\]
Multiplying the above equation by $i$, then choosing $\phi = \xi_{ut}^\ast$ and summing it over all elements $K$, we arrive at
	\begin{equation}\label{i_xiut}
	\int_{\Omega_h} |\xi_{ut}|^2\ d{\bf x} = \sum_K\int_{K} -\eta_{ut}\xi_{ut}^\ast + e_u\xi_{ut}^\ast +i\left(u F'(|u|^2)\xi_{ut}^\ast - u_hF'(|u_h|^2)\xi_{ut}^\ast\right)\ d{\bf x}.
	\end{equation}
For the right-hand side, we apply the property (\ref{projrel}) of the projection operator $\mathbb{P}_h^+$ and (\ref{s_est_3}) to estimate
\begin{equation}\label{e11}
		\Big|\sum_K\int_{K} -\eta_{ut}({\bf x},0)\xi_{ut}^\ast({\bf x},0) + e_u({\bf x},0)\xi_{ut}^\ast({\bf x},0)\ d{\bf x}\Big| \leq Ch^{q+1}\|\xi_{ut}({\bf x},0)\|_{L^2(\Omega_h)},
		\end{equation}
while for the nonlinear terms we have from the Taylor expansion that
 \begin{equation}\label{taylor}
 F'(|u_h|^2) = F'(|u|^2) + F''(|u|^2)\gamma+ \frac{1}{2}F'''(|\hat{u}|^2)\gamma^2,
 \end{equation}
 where $|\hat{u}|^2$ is between $|u|^2$ and $|u_h|^2$, and
 \begin{equation}\label{gamma}
 \gamma := |u_h|^2 - |u|^2  = u_hu_h^{\ast} - uu^\ast = (u - e_u)(u^\ast- e_u^\ast) - uu^\ast = |e_u|^2 - 2\mbox{Re}(u^\ast e_u).
 \end{equation}
 
 Finally, we obtain
 \begin{align}
 &\Big|\sum_K\int_{K} i\left(u F'(|u|^2) - u_hF'(|u_h|^2)\right)\xi_{ut}^\ast\ d{\bf x}\Big|\nonumber\\
 \leq &  \Big|\sum_K\int_{K} u\left(-F''(|u|^2)\gamma - \frac{1}{2}F'''(|\hat{u}|^2)\gamma^2\right)\xi_{ut}^\ast\ d{\bf x}\Big|\nonumber\\
 &+ \Big|\sum_K\int_{K} e_u\left(F'(|u|^2) + F''(|u|^2)\gamma + \frac{1}{2}F'''(|\hat{u}|^2)\gamma^2\right)\xi_{ut}^\ast\ d{\bf x}\Big|\nonumber\\
 \leq & Ch^{q+1}\|\xi_{ut}(x,0)\|_{L^2(\Omega_h)},\label{e12}
 \end{align}
where $C$ is dependent of $\|F'\|_{W^{2,\infty}(\Omega_h)}$, $\|u\|_{L^\infty(\Omega_h)}$, but not $h$. Finally, combining (\ref{i_xiut}), (\ref{e11}) with (\ref{e12}) yields $\|\xi_{ut}(x,0)\|_{L^2(\Omega_h)} \leq Ch^{q+1}$, and this completes the proof.
\end{proof}


\subsection{Optimal error estimates for $t > 0$}\label{error_sec}
We are now ready to present error estimates for the DG scheme (\ref{DG_scheme1_v2})-(\ref{DG_scheme2_v2}) with the numerical fluxes (\ref{fluxa}).  In particular, we shall show that the estimates are optimal in the $L^2$-norm.
\begin{truth}\label{them2}
Let $(u,w)$ be a smooth solution of system (\ref{system_2nd}), and $(u_h, w_h)\in V^q_h\times V_h^q, q\geq1$ be the numerical solution of the DG scheme (\ref{DG_scheme1_v2})-(\ref{DG_scheme2_v2}) with the smooth initial data computed by (\ref{steady_state}) along with periodic boundary conditions and the numerical fluxes (\ref{fluxa}), then we have the following error estimates :
	\begin{equation}\label{error_est}
	\|e_u({\bf x},T)\|_{L^2(\Omega_h)} + \|e_w({\bf x},T)\|_{L^2(\Omega_h)} + \|e_{ut}({\bf x},T)\|_{L^2(\Omega_h)} \leq Ch^{q+1},
	\end{equation}
	where $C$ is a positive constant that depends on $q$, $\|F'\|_{W^{3,\infty}(\Omega_h)}$, $\|u\|_{L^\infty(\Omega_h)}$, $\left|\left|u_t\right|\right|_{L^\infty(\Omega_h)}$ and $T$, but not $h$.
\end{truth}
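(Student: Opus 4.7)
The plan is to transplant the two stability identities of Theorem \ref{them1} to the numerical error, relying on the Gauss-Radau pairing $P_h^+u$ and $P_h^-w$, which is chosen precisely so that the boundary traces of the projection errors $\eta_u,\eta_w$ that would appear in $\mathcal{B}_j^1$ and $\mathcal{B}_j^2$ under the alternating fluxes (\ref{fluxa}) vanish identically by (\ref{proj1})--(\ref{proj2}). First I would write the Galerkin error equations against the time-dependent scheme, split $e_u=\eta_u+\xi_u$, $e_w=\eta_w+\xi_w$, $e_{ut}=\eta_{ut}+\xi_{ut}$, and reduce all remaining $\eta$-contributions to controllable volume integrals via (\ref{projrel}).

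Next I would derive three energy-type identities on the numerical error $\xi$, each modeled on a cancellation already exploited in the stability proof. Mimicking the mass argument, take $\phi=\xi_u^*$ in the first error equation and $\psi=\xi_w^*$ in the second, then combine the pair with their complex conjugates multiplied by $\pm i$: the face terms reduce, as in (\ref{eq3})--(\ref{eq4}), to a real quantity that vanishes after taking $2i\,\mathrm{Im}$, leaving $\tfrac{d}{dt}\|\xi_u\|_{L^2(I)}^2$ on one side and a projection-plus-nonlinear residual on the other. Mimicking the Hamiltonian argument, differentiate the second error equation in $t$, test the first with $\phi=\xi_{ut}^*$ and the differentiated one with $\psi=\xi_w^*$; the analogues of (\ref{eq6})--(\ref{eq7}) then yield $\tfrac{d}{dt}\|\xi_w\|_{L^2(I)}^2$ plus a residual involving $\xi_{ut}$. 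Finally, differentiating both error equations in $t$ once more and running the mass-type combination on the differentiated system produces $\tfrac{d}{dt}\|\xi_{ut}\|_{L^2(I)}^2$; this is where the $\|F'\|_{W^{3,\infty}}$-norm from the statement enters, since $F'(|u_h|^2)$ has now been differentiated twice in time.

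The nonlinear residuals I would expand via the Taylor identity (\ref{taylor})--(\ref{gamma}) already employed in Lemma \ref{lemma1}. Using the a priori bounds (\ref{prior2}) on $\|e_u\|_{L^\infty}$ and $\|e_{ut}\|_{L^\infty}$, each residual is bounded by $C\bigl(\|\xi_u\|^2+\|\xi_w\|^2+\|\xi_{ut}\|^2+h^{2q+2}\bigr)$, with $C$ depending on $\|F'\|_{W^{3,\infty}}$, $\|u\|_{L^\infty}$, and $\|u_t\|_{L^\infty}$ but not on $h$. Summing the three identities, absorbing the projection traces by Young's inequality together with (\ref{projrel}), and invoking Gronwall with the initial bound from Lemma \ref{lemma1} yields (\ref{error_est}). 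A standard bootstrapping argument closes the loop on the a priori assumption (\ref{prior3}): for $h$ sufficiently small, the derived $O(h^{q+1})$ bound is strictly smaller than $h$, so the set of times on which (\ref{prior3}) holds is open, closed, and nonempty in $[0,T]$.

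The step I expect to be the main obstacle is the third identity controlling $\xi_{ut}$: differentiating the scheme in time pulls a $\partial_t|u_h|^2$ factor through $F''(|u_h|^2)$, and bounding the resulting term requires either an inverse inequality coupled to (\ref{prior3}) or a delicate re-use of the previous two identities before Gronwall can be applied. Coordinating the three energy identities so that no derivative is lost on $\xi_w$ or $\xi_{ut}$ at interfaces, while keeping all boundary remainders of order $h^{q+1}$, is what the alternating-flux plus Gauss-Radau pairing is designed to accomplish.
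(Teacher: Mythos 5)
Your overall architecture coincides with the paper's: the same three energy identities (testing with $\xi_u^\ast,\xi_w^\ast$, then with $\xi_{ut}^\ast,\xi_w^\ast$ after differentiating the second error equation in time, then with $\xi_{ut}^\ast,\xi_{wt}^\ast$ after differentiating both), the same use of the Gauss--Radau pairing to kill the interface terms, the same Taylor expansion (\ref{taylor})--(\ref{gamma}) with the a priori bound (\ref{prior2}) for the nonlinearity, and the same Gronwall-plus-bootstrap closure. So the route is not different; the issue is one concrete step inside your third identity.

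The gap is your claim that in the third identity ``each residual is bounded by $C\bigl(\|\xi_u\|^2+\|\xi_w\|^2+\|\xi_{ut}\|^2+h^{2q+2}\bigr)$.'' Testing the twice-differentiated second equation with $\psi=\xi_{wt}^\ast$ produces the projection residual $W_4=-\sum_j\int_{I_j}2\,\mathrm{Im}(\eta_{wt}\xi_{wt}^\ast)\,dx$, and $\xi_{wt}$ is not one of the three controlled quantities: no differential inequality for $\|\xi_{wt}\|_{L^2}$ is ever established, so this term cannot be absorbed pointwise in time, and an inverse inequality does not help because there is no $L^2$ bound on $\xi_{wt}$ to start from. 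You correctly sense that step three is the obstacle, but you locate it in the $\partial_t|u_h|^2$ factor through $F''$ (which is handled routinely by (\ref{prior2}) and (\ref{gammate})) rather than here. The paper's resolution, see (\ref{w4})--(\ref{final2}), is to carry $|W_4|$ unestimated into the time-integrated inequality (\ref{final1}) and then integrate $\int_0^T \mathrm{Im}(\eta_{wt}\xi_{wt}^\ast)\,dt$ by parts \emph{in time}, moving the derivative onto $\eta_{wt}$; this leaves endpoint terms $\mathrm{Im}\bigl(\eta_{wt}\xi_w^\ast\bigr)$ at $t=0$ and $t=T$ (the latter absorbed by Young's inequality into $\tfrac12\|\xi_w(x,T)\|^2$ on the left-hand side, the former controlled by Lemma \ref{lemma1}) plus a term $C\int_0^T\|\xi_w\|^2\,dt$ that Gronwall handles. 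Without this temporal integration by parts your three identities do not close, so you should add it explicitly.
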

\begin{proof}
Our proof consists of three steps: i) the estimate of $d\|\xi_u\|_{L^2(K)}/dt$, ii) the estimate of $d\|\xi_w\|_{L^2(K)}/dt$, and iii) the estimate of $d\|\xi_{ut}\|_{L^2(K)}/dt$.
	
\textbf{Step one}: 
By the DG scheme (\ref{DG_scheme1_v2})--(\ref{DG_scheme2_v2}), we have the following error equations for any $(\phi,\psi) \in V_h^q\times V_h^q$
	\begin{align}
	&\int_{K} e_{ut} \phi\ d{\bf x}-i \int_{K}uF'(|u|^2)\phi - u_hF'(|u_h|^2)\phi\ d{\bf x} -i \mathcal{B}_K^1(e_w,\phi) = 0,\label{erroru1}\\
	&\int_{K} i e_w\psi \ d{\bf x}  + i \mathcal{B}_K^2(e_u,\psi) = 0 ,\label{erroru2}
	\end{align}
where $\mathcal{B}_K^1$ and $\mathcal{B}_K^2$ are defined in (\ref{B1}) and (\ref{B2}), respectively. Choosing $\phi = \xi_u^\ast$ and $\psi = \xi_w^\ast$ in (\ref{erroru1})-(\ref{erroru2}), and taking the complex conjugate, then adding them with the resulting two equations and summing over all elements $K$ yields

	\begin{equation}\label{xiu}
	\frac{d}{dt}\sum_K\int_{K} |\xi_u|^2\ dx = \Lambda_1 + \Lambda_2 + \Lambda_3 + \Lambda_4,
	\end{equation}
	where we have used the relations $e_u = \eta_u + \xi_u$, $e_w = \eta_w + \xi_w$, and
	\begin{align*}
	\Lambda_1 &:= \sum_K\int_{K} -2\mbox{Re}(\eta_{ut}\xi_u^\ast) \  + 2\mbox{Im}(\eta_w\xi_w^\ast)\ d{\bf x},\\
	\Lambda_2 &:= i\sum_K -\mathcal{B}_K^2(\xi_u,\xi_w^\ast) + \mathcal{B}_K^2(\xi_u^\ast,\xi_w) + \mathcal{B}_K^1(\xi_w,\xi_u^\ast) - \mathcal{B}_K^1(\xi_w^\ast,\xi_u), \\
	\Lambda_3 &:= i\sum_K -\mathcal{B}_K^2(\eta_u,\xi_w^\ast) + \mathcal{B}_K^2(\eta_u^\ast,\xi_w) + \mathcal{B}_K^1(\eta_w,\xi_u^\ast) - \mathcal{B}_K^1(\eta_w^\ast,\xi_u), \\
	\Lambda_4 &:= \sum_K\int_{K} -2F'(|u|^2)\mbox{Im}(u\xi_u^\ast)  +  2F'(|u_h|^2)\mbox{Im}(u_h\xi_u^\ast)\ d{\bf x},
	\end{align*}
 are to be estimated separately.
	
	From the property (\ref{projrel}) of the projection operators, we get
	\begin{equation}\label{lambda1}
	|\Lambda_1| \leq Ch^{q+1} \big(\|\xi_u\|_{L^2(\Omega_h)} + \|\xi_w\|_{L^2(\Omega_h)}\big),
	\end{equation}
	and the same analysis as in the derivation of the mass conservation (\ref{M}) in Theorem \ref{them1} leads us to 
	\begin{equation}\label{lambda2}
	    \Lambda_2 = 0,
	\end{equation}
	As for the estimation of $\Lambda_3$, from the definition of the projection operators $\mathbb{P}_h^{\pm}$, the numerical fluxes (\ref{fluxa}) and the Lemma \ref{lemmaB1}--\ref{lemmaB2}, we obtain
	\begin{equation}\label{lambda3}
\left\{
	\begin{aligned}
	&|\Lambda_3| = 0, \quad d = 1,\\
	&|\Lambda_3| \leq Ch^{q+2}(\|\xi_u\|_{L^2(\Omega_h)} + \|\xi_w\|_{L^2(\Omega_h)}), \quad d = 2.
	\end{aligned}
	\right.
	\end{equation}
	To estimate $\Lambda_4$, we first rewrite it as
	\begin{equation*}
	\Lambda_4 = -2\sum_K\int_K \big(F'(|u|^2) -F'(|u_h|^2) \big)\mbox{Im}(u\xi_u^\ast) +F'(|u_h|^2)\mbox{Im}(e_u\xi_u^\ast)\ d{\bf x}.
	\end{equation*}
	It can be further rewritten as follows thanks to (\ref{taylor}) and (\ref{gamma}) 
	\begin{equation*}
	\Lambda_4 = 2(\Lambda_{41} + \Lambda_{42} ),
	\end{equation*}
	where
	\[\Lambda_{41} = \sum_K \int_{K}\Big( F''(|u|^2)\gamma
	+ \frac{1}{2}F'''(|\hat{u}|^2)\gamma^2\Big)\mbox{Im}(u\xi_u^\ast)\ d{\bf x} \]
	and
	\[\Lambda_{42} = -\sum_K\int_{K}\Big(F'(|u|^2) + F''(|u|^2)\gamma+ \frac{1}{2}F'''(|\hat{u}|^2)\gamma^2\Big)\mbox{Im}(e_u\xi_u^\ast)\ d{\bf x}.\]
	We first have 
	\[\int_{K} |\gamma\mbox{Im}(u\xi_u^\ast)|  d{\bf x} \leq C\left(\|u\|_{L^\infty(K)} \|e_u\|_{L^\infty(K)} + \|u\|^2_{L^\infty(K)}\!\right)\left(\|\xi_u\|_{L^2(K)}^2 + \|\eta_u\|_{L^2(K)}^2\!\right),\]
	hence
	\begin{multline*}
	|\Lambda_{41}| \!\leq\! C\Big(\|F''\|_{L^\infty(\Omega_h)} + \|\gamma\|_{L^\infty(\Omega_h)}\|F'''\|_{L^\infty(\Omega_h)}\Big)\cdot\\
	\left(\|u\|_{L^\infty(\Omega_h)} \|e_u\|_{L^\infty(\Omega_h)} + \|u\|^2_{L^\infty(\Omega_h)}\!\right)
	\Big(\|\xi_u\|_{L^2(\Omega_h)}^2 + \|\eta_u\|_{L^2(\Omega_h)}^2\Big).
	\end{multline*}
	
	On the other hand, we have $\mbox{Im}(e_u\xi_u^\ast)  = \mbox{Im}(\eta_u\xi_u^\ast)$ since $\xi_u\xi_u^\ast = |\xi_u|^2$ is a real number, then 
	\begin{equation*}
	|\Lambda_{42}| \leq\! C\Big(\!\|F'\|_{L^\infty(\Omega_h)}\!+ \|\gamma\|_{L^\infty(\Omega_h)}\|F''\|_{L^\infty(\Omega_h)} \!+ \|\gamma^2\|_{L^\infty(\Omega_h)}\|F'''\|_{L^\infty(\Omega_h)}\!\Big)\Big(\|\xi_u\|_{L^2(\Omega_h)}^2 \!+\! \|\eta_u\|_{L^2(\Omega_h)}^2\Big).
	\end{equation*}
	
Combining the bounds of $\Lambda_{41}$ and $\Lambda_{42}$ derived above, we obtain
\begin{align}\label{lambda31}
|\Lambda_4| \leq C\Bigg(&\left(\|F''\|_{L^\infty(\Omega_h)} + \|\gamma\|_{L^\infty(\Omega_h)}\|F'''\|_{L^\infty(\Omega_h)}\right)\nonumber\\ &\cdot \left(\|u\|_{L^\infty(\Omega_h)} \|e_u\|_{L^\infty(\Omega_h)} + \|u\|^2_{L^\infty(\Omega_h)}\!\right) +\|F'\|_{L^\infty(\Omega_h)}
\\&+\|\gamma\|_{L^\infty(\Omega_h)}\|F''\|_{L^\infty(\Omega_h)} + \|\gamma^2\|_{L^\infty(\Omega_h)}\|F'''\|_{L^\infty(\Omega_h)}\Bigg)\Big(\|\xi_u\|_{L^2(\Omega_h)}^2 + \|\eta_u\|_{L^2(\Omega_h)}^2\Big),\nonumber
\end{align}

while from the a priori estimate (\ref{prior2}), we further get
\begin{equation}\label{gammae}
\|\gamma\|_{L^\infty(\Omega_h)} \leq C\left(1+\|u\|_{L^\infty(\Omega_h)}\right),\ \ \|\gamma^2\|_{L^\infty(\Omega_h)} \leq C\left(1 + \|u\|_{L^\infty(\Omega_h)} + \|u\|^2_{L^\infty(\Omega_h)}\right).
\end{equation}
Then we have from (\ref{gammae}) and (\ref{lambda31}) that
\begin{equation}\label{lambda4}
|\Lambda_4| \leq C \Big(\|\xi_u\|_{L^2(\Omega_h)}^2 + \|\eta_u\|_{L^2(\Omega_h)}^2\Big),
\end{equation}
where $C$ depends on $\|F'\|_{W^{2,\infty}(\Omega_h)}$ and $\|u\|_{L^\infty(\Omega_h)}$, but not $h$. Finally, plugging (\ref{lambda1})--(\ref{lambda3}) and (\ref{lambda4}) into (\ref{xiu}) and using Young's inequality, we have
	\begin{equation}\label{xiut}
	\frac{d}{dt}\|\xi_u\|_{L^2(\Omega_h)}^2 \leq C\Big(\|\xi_u\|_{L^2(\Omega_h)}^2 + \|\xi_w\|_{L^2(\Omega_h)}^2 + h^{2(q+1)} \Big).
	\end{equation}
	
\textbf{Step two}: 
We first differentiate (\ref{DG_scheme2_v2}) against time. Then we use the resulting equation and (\ref{DG_scheme1_v2}) to generate the following error equations
\begin{align}
&i\int_{K} e_{ut} \phi\ d{\bf x}+ \int_{K}uF'(|u|^2)\phi - u_hF'(|u_h|^2)\phi\ d{\bf x} + \mathcal{B}_K^1(e_w,\phi) = 0,\label{errorw1}\\
&\int_{K} e_{wt}\psi \ d{\bf x}  + \mathcal{B}_K^2(e_{ut},\psi) = 0 ,\label{errorw2}
\end{align}
for any $(\phi, \psi) \in V_h^q\times V_h^q$. Here $\mathcal{B}_K^1$ and $\mathcal{B}_K^2$ are defined in (\ref{B1}) and (\ref{B2}), respectively. Setting $\phi = \xi_{ut}^\ast$, $\psi = \xi_w^\ast$ in (\ref{errorw1})-(\ref{errorw2}), and taking the complex conjugate, then adding them with the resulting two equations and summing over all elements $K$ gives 
	\begin{equation}\label{xiw}
	\frac{d}{dt} \sum_K \int_{K} |\xi_w|^2\ d{\bf x} = \Theta_1 + \Theta_2 + \Theta_3 + \Theta_4,
	\end{equation}
	where we use the relations $e_u = \eta_u + \xi_u$, $e_w = \eta_w + \xi_w$ and denote
	\begin{align*}
	\Theta_1 &:= \sum_K\int_{K} 2\mbox{Im}(\eta_{ut}\xi_{ut}^\ast) - 2\mbox{Re}(\eta_{wt}\xi_w^\ast)\ d{\bf x},\\
	\Theta_2 &:= -\sum_K \mathcal{B}_K^1(\xi_w,\xi_{ut}^\ast) + \mathcal{B}_K^1(\xi^\ast_w,\xi_{ut})+ \mathcal{B}_K^2(\xi_{ut},\xi_w^\ast) + \mathcal{B}_K^2(\xi^\ast_{ut},\xi_w),\\
	\Theta_3 &:= -\sum_K \mathcal{B}_K^1(\eta_w,\xi_{ut}^\ast) + \mathcal{B}_K^1(\eta^\ast_w,\xi_{ut})+ \mathcal{B}_K^2(\eta_{ut},\xi_w^\ast) + \mathcal{B}_K^2(\eta^\ast_{ut},\xi_w),\\
	\Theta_4 &:= -\sum_K \int_{K}2F'(|u|^2)\mbox{Re}(u\xi_{ut}^\ast) - 2F'(|u_h|^2)\mbox{Re}(u_h\xi_{ut}^\ast)\ d{\bf x}. 
	\end{align*}
	By similar analysis as in step one, we get
	\[
	|\Theta_1| \leq Ch^{q+1}\Big(\|\xi_{ut}\|_{L^2(\Omega_h)} + \|\xi_w\|_{L^2(\Omega_h)}\Big),\quad |\Theta_4| \leq C \Big(\|\xi_{ut}\|_{L^2(\Omega_h)}^2 +\|\xi_{u}\|_{L^2(\Omega_h)}^2 + \|\eta_u\|_{L^2(\Omega_h)}^2\Big),\]
	and
	\[\left\{
	\begin{aligned}
	&|\Theta_3| = 0, \quad d = 1,\\
	&|\Theta_3| \leq  Ch^{q+2}(\|\xi_{ut}\|_{L^2(\Omega_h)} + \|\xi_w\|_{L^2(\Omega_h)}),\quad d = 2.
	\end{aligned}
	\right.\]
	From the same analysis as the derivation of the Hamiltonian conservation (\ref{E}) in Theorem \ref{them1}, we obtain
	\[\Theta_2 = 0 .\] 
	Plugging the above equality and inequalities into (\ref{xiw}) and using Young's inequality, we have
	\begin{equation}\label{xiwt}
	\frac{d}{dt}\|\xi_w\|_{L^2(\Omega_h)}^2 \leq C\Big(\|\xi_u\|_{L^2(\Omega_h)}^2 +\|\xi_{ut}\|_{L^2(\Omega_h)}^2+ \|\xi_w\|_{L^2(\Omega_h)}^2 + h^{2(q+1)} \Big).
	\end{equation}
	 Note that $\|\xi_{ut}\|_{L^2(\Omega_h)}^2$ appears in (\ref{xiwt}) and it is unknown. To get the estimate of $d\|\xi_w\|_{L^2(\Omega_h)}^2/dt$, we need to establish an inequality regarding $\|\xi_{ut}\|_{L^2(\Omega_h)}^2$. 
	
	\textbf{Step three} : an inequality for $d\|\xi_{ut}\|_{L^2(\Omega_h)}^2/dt$. Taking the time derivative of (\ref{DG_scheme1_v2})-(\ref{DG_scheme2_v2}) we get the following error equations
	\begin{align}
	&\int_{K} e_{utt} \phi\ d{\bf x}-i \int_{K}\frac{d}{dt}\Big(uF'(|u|^2)\phi - u_hF'(|u_h|^2)\Big)\phi\ dx -i \mathcal{B}_K^1(e_{wt},\phi) = 0,\label{errorut1}\\
	&\int_{K}i e_{wt}\psi \ d{\bf x}  + i\mathcal{B}_K^2(e_{ut},\psi) = 0 ,\label{errorut2}
	\end{align}
	for any $(\phi,\psi) \in V^q_h\times V^q_h$, and $\mathcal{B}_K^1$, $\mathcal{B}_K^2$ are defined in (\ref{B1}) and (\ref{B2}), respectively. Choosing $\phi = \xi_{ut}^\ast$, $\psi = \xi_{wt}^\ast$ in (\ref{errorut1})-(\ref{errorut2}), and taking their complex conjugate, then adding them with the resulting two equations and summing over all elements $K$, we have
	\begin{equation}\label{errorut}
	\frac{d}{dt}\sum_K\int_{K} |\xi_{ut}|^2\ d{\bf x} = W_1 + W_2 + W_3 + W_4 + W_5 + W_6,
	\end{equation}
	where we have used the relations $e_u = \eta_u + \xi_u$, $e_w = \eta_w + \xi_w$, and
	\begin{align*}
	W_1  &:= \sum_K\int_{K} -2\mbox{Re}(\eta_{utt}\xi_{ut}^\ast) \ d{\bf x},\\
	W_2 &:= i\sum_K \mathcal{B}_K^2(\xi_{ut}^\ast,\xi_{wt}) - \mathcal{B}_K^2(\xi_{ut},\xi_{wt}^\ast) + \mathcal{B}_K^1(\xi_{wt},\xi_{ut}^\ast) - \mathcal{B}_K^1(\xi_{wt}^\ast,\xi_{ut}), \\
	W_3 &:= i\sum_K \mathcal{B}_K^1(\eta_{wt},\xi_{ut}^\ast) - \mathcal{B}_K^1(\eta_{wt}^\ast,\xi_{ut}), \\
	W_4 &:= i\sum_K \!\!\int_{K}\frac{d}{dt}\Big(uF'(|u|^2) \!-\! u_hF'(|u_h|^2)\Big)\xi_{ut}^\ast \!-\! \frac{d}{dt}\Big(u^\ast F'(|u|^2) \!-\! u_h^{\ast}F'(|u_h|^2)\Big)\xi_{ut} \ d{\bf x},\\
	W_5 &: = i\sum_K \mathcal{B}_K^2(\eta_{ut}^\ast,\xi_{wt}) - \mathcal{B}_K^2(\eta_{ut},\xi_{wt}^\ast),\\
	W_6  &:= \sum_K\int_{K}  2\mbox{Im}(\eta_{wt}\xi_{wt}^\ast)d{\bf x},
	\end{align*}
	which will be estimated separately. By a similar analysis as in step one, we have
	\begin{equation}\label{w2}    
    |W_1| \leq Ch^{q+1} \|\xi_{ut}\|_{L^2(\Omega_h)},\quad W_2 = 0,
	\end{equation}
	and
	\begin{equation}\label{w2_2}  
	\left\{
	\begin{aligned}
	&|W_3 + W_5| = 0,\quad d = 1\\
	&|W_3| \leq Ch^{q+2}\|\xi_{ut}\|_{L^2(\Omega_h)}, \quad d = 2.
	\end{aligned}
	\right.
	\end{equation}
	 
	To estimate $W_4$, we first rewrite it as
	\begin{equation}\label{w4_first}
	W_4 = -2(W_{41} + W_{42}),
	\end{equation}
	where
	\begin{equation*}
W_{41}= \sum_K\int_{K}\big(F'(|u|^2)-F'(|u_h|^2)\big)\mbox{Im}(u_t\xi_{ut}^\ast) + F'(|u_h|^2)\mbox{Im}(e_{ut}\xi_{ut}^\ast)\ d{\bf x},
\end{equation*}
and
\begin{multline*}
W_{42} = \sum_K\int_{K}\left(F''(|u|^2)\frac{d|u|^2}{dt}-F''(|u_h|^2)\frac{d|u_h|^2}{dt}\right)\mbox{Im}(u\xi_{ut}^\ast) +F''(|u_h|^2)\frac{d|u_h|^2}{dt}\mbox{Im}(e_u\xi_{ut}^\ast)\ d{\bf x}.
\end{multline*}
 
The same analysis that leads to $\Lambda_4$ in step one gives rise to
\begin{equation}\label{W41_first}
	|W_{41}| \leq C
	\Big(\|\xi_u\|_{L^2(\Omega_h)}^2 + \|\xi_{ut}\|_{L^2(\Omega_h)}^2+ \|\eta_u\|_{L^2(\Omega_h)}^2 + \|\eta_{ut}\|_{L^2(\Omega_h)}^2\Big).
	\end{equation}

To estimate $W_{42}$, we find for some $|\underline{u}|^2$ between $|u|^2$ and $|u_h|^2$ that
\[F''(|u_h|^2) = F''(|u|^2) + F'''(|u|^2)\gamma+ \frac{1}{2}F^{(4)}(|\underline{u}|^2)\gamma^2,\]
where $\gamma$ is defined in (\ref{gamma}); moreover, we divide $W_{42}$ into two parts as
	\begin{equation}\label{w32}
	W_{42} = W_{42}^{(1)} + W_{42}^{(2)}, 
	\end{equation}
where
	\begin{equation*}
	W_{42}^{(1)} \!:=\!\! \sum_K\!\!\int_{K}\!\!\!\Bigg(\!\!-F''(|u|^2)\frac{d\gamma}{dt}-\left(\!\!F'''(|u|^2)\gamma \!+\! \frac{1}{2}F^{(4)}(|\underline{u}|^2)\gamma^2\!\right)
	\left(\!\frac{d|u|^2}{dt}\!+\!\frac{d\gamma}{dt}\right)\!\!\Bigg)\mbox{Im}(u\xi_{ut}^\ast)\ d{\bf x},
	\end{equation*}
	\begin{equation*}
	W_{42}^{(2)}\!: =\! \sum_{K}\int_{K}\left(F''(|u|^2) + F'''(|u|^2)\gamma + \frac{1}{2}F^{(4)}(|\underline{u}|^2)\gamma^2\right)\left(\frac{d|u|^2}{dt}+\frac{d\gamma}{dt}\right)\mbox{Im}(e_u\xi_{ut}^\ast)\ d{\bf x}.
	\end{equation*}
In light of the definition of $\gamma$ and estimate (\ref{prior2}), we obtain
	\begin{multline}\label{w311}
	|W_{42}^{(1)}| \leq C\Bigg(\|F''\|_{L^\infty(\Omega_h)}\Big(1+\|u\|_{L^\infty(\Omega_h)}+\left|\left|\frac{du}{dt}\right|\right|_{L^\infty(\Omega_h)}\Big)+\Big(\|F'''\|_{L^\infty(\Omega_h)}\left(1+\|u\|_{L^\infty(\Omega_h)}\right)\\ +\|F^{(4)}\|_{L^\infty(\Omega_h)}\|\gamma\|_{L^\infty(\Omega_h)}\left(1+\|u\|_{L^\infty(\Omega_h)}\right)\bigg)\bigg(\left|\left|\frac{d|u|^2}{dt}\right|\right|_{L^\infty(\Omega_h)} \!\!\\
	+\left|\left|\frac{d\gamma}{dt}\right|\right|_{L^\infty(\Omega_h)}\bigg)  \Bigg) \|u\|_{L^\infty(\Omega_h)}
	\left(\|\xi_{ut}\|_{L^2(\Omega_h)}^2 +\|\xi_u\|_{L^2(\Omega_h)}^2 + \|\eta_u\|_{L^2(\Omega_h)}^2+\|\eta_{ut}\|_{L^2(\Omega_h)}^2\right),
	\end{multline}
	and
	\begin{multline}\label{w312}
	|W_{42}^{(2)}| \leq C\left(\|F''\|_{L^\infty(\Omega_h)}+\|F'''\|_{L^\infty(\Omega_h)}\|\gamma\|_{L^\infty(\Omega_h)}+\|F^{(4)}\|_{L^\infty(\Omega_h)}\|\gamma^2\|_{L^\infty(\Omega_h)}\right)\\
	\cdot\left(\left|\left|\frac{d|u|^2}{dt}\right|\right|_{L^\infty(\Omega_h)} + \left|\left|\frac{d\gamma}{dt}\right|\right|_{L^\infty(\Omega_h)}\right)
	\left(\|\xi_{u}\|^2_{L^2(\Omega_h)} + \|\xi_{ut}\|_{L^2(\Omega_h)}^2 +\|\eta_u\|_{L^2(\Omega_h)}^2 \right).
	\end{multline}
	In addition, in light of the definition of $\gamma$ in (\ref{gamma}) and estimate (\ref{prior2}), we have
	\begin{equation}\label{gammate}
	\left|\left|\frac{d\gamma}{dt}\right|\right|_{L^\infty(\Omega_h)} \leq C\bigg(1 + \|u\|_{L^\infty(\Omega_h)} + \left|\left|\frac{du}{dt}\right|\right|_{L^\infty(\Omega_h)}\bigg).
	\end{equation}
	Plugging (\ref{gammae}) and (\ref{gammate}) into (\ref{w311})-(\ref{w312}), we apply (\ref{w4_first}) together with (\ref{W41_first}) and (\ref{w32})--(\ref{w312}) to obtain
	\begin{equation}\label{w3}
	|W_4| \leq C\left(\|\xi_{u}\|^2_{L^2(\Omega_h)} + \|\xi_{ut}\|_{L^2(\Omega_h)}^2+h^{2(q+1)}\right),
	\end{equation}
	where $C$ is a positive constant which depends on $\|F'\|_{W^{3,\infty}(\Omega_h)}$, $\|u\|_{L^\infty(\Omega_h)}$, and $\left|\left|\frac{d u}{d t}\right|\right|_{L^\infty(\Omega_h)}$, but not $h$. 
	Substituting (\ref{w2}), (\ref{w2_2}) and (\ref{w3}) into (\ref{errorut}) and using Young's inequality, we obtain
	\begin{equation}\label{xiut1}
	\frac{d}{dt} \|\xi_{ut}\|_{L^2(\Omega_h)}^2 \leq C\left(\|\xi_{u}\|^2_{L^2(\Omega_h)} + \|\xi_{ut}\|_{L^2(\Omega_h)}^2+h^{2(q+1)}\right) + |W_5| + |W_6|.
	\end{equation}
	Note that for the one-dimensional case, $d = 1$, $|W_3 + W_5| = 0$ as shown in (\ref{w2_2}), we will not have the term $|W_5|$ in (\ref{xiut1}). Collecting (\ref{xiut}), (\ref{xiwt}) and (\ref{xiut1}) leads us to
	\begin{multline*}
	\frac{d}{dt}\left(\|\xi_{u}\|_{L^2(\Omega_h)}^2 +\|\xi_{w}\|_{L^2(\Omega_h)}^2 +\|\xi_{ut}\|_{L^2(\Omega_h)}^2 \right)  \\ \leq C \left(\|\xi_{u}\|_{L^2(\Omega_h)}^2 +\|\xi_{w}\|_{L^2(\Omega_h)}^2+ \|\xi_{ut}\|_{L^2(\Omega_h)}^2 \right) + Ch^{2(q+1)} + |W_5| + |W_6|.
	\end{multline*}
    Now, we integrate this inequality from $0$ to $T$ to find that
\begin{align}\label{final1}
&\|\xi_{u}({\bf x},T)\|_{L^2(\Omega_h)}^2 +\|\xi_{w}({\bf x},T)\|_{L^2(\Omega_h)}^2 +\|\xi_{ut}({\bf x},T)\|_{L^2(\Omega_h)}^2 \nonumber\\  
\leq& \|\xi_{u}({\bf x},0)\|_{L^2(\Omega_h)}^2 +\|\xi_{w}({\bf x},0)\|_{L^2(\Omega_h)}^2 +\|\xi_{ut}({\bf x},0)\|_{L^2(\Omega_h)}^2 \nonumber\\
&+ C\int_0^T \|\xi_{u}\|_{L^2(\Omega_h)}^2 +\|\xi_{w}\|_{L^2(\Omega_h)}^2+ \|\xi_{ut}\|_{L^2(\Omega_h)}^2\  dt + Ch^{2(q+1)} + \int_0^T |W_5| + |W_6|\ dt.     
\end{align}  
For $\int_0^T |W_5| dt$ in the case of $d = 2$, we have from the integration by parts in time that
\begin{multline}\label{w5}
\int_0^T |W_5|\ dt = \int_0^T \Big|i\sum_K \mathcal{B}_K^2(\eta_{ut}^\ast,\xi_{wt}) - \mathcal{B}_K^2(\eta_{ut},\xi_{wt}^\ast)\Big|\ dt \\
\leq \sum_K \Big(\Big|\mathcal{B}_K^2(\eta_{ut}^\ast,\xi_{w})\Big| + \Big|\mathcal{B}_K^2(\eta_{ut},\xi_{w}^\ast)\Big|\Big)\Big|_0^T + \int_0^T \sum_K\Big|\mathcal{B}_K^2(\eta_{utt}^\ast,\xi_{w})\Big| + \Big|\mathcal{B}_K^2(\eta_{utt},\xi_{w}^\ast)\Big|\ dt.
\end{multline}
For $\int_0^T |W_6| dt$, again use integration by parts in time we obtain
	\begin{multline}\label{w6}
	\int_0^T |W_6|\ dt = \int_0^T\Big|\sum_K\int_{K} 2\mbox{Im}(\eta_{wt}\xi_{wt}^\ast) d{\bf x}\Big|\ dt\\
	 \leq \Big|\sum_K\int_{K} 2\mbox{Im}(\eta_{wt}\xi_{w}^\ast) d{\bf x}\Big|_0^T+ \int_0^T \Big|\sum_K\int_K 2\mbox{Im}(\eta_{wtt}\xi_{w}^\ast) \Big|dt\Big).
	\end{multline}
	Plugging (\ref{w5})--(\ref{w6}) into (\ref{final1}), we invoke Young's inequality, Lemma \ref{lemmaB1}, Lemma \ref{lemmaB2}, and Lemma \ref{lemma3} to find 
	\begin{align}\label{final2}
	&\|\xi_{u}({\bf x},T)\|_{L^2(\Omega_h)}^2 +\|\xi_{w}({\bf x},T)\|_{L^2(\Omega_h)}^2 +\|\xi_{ut}({\bf x},T)\|_{L^2(\Omega_h)}^2\nonumber\\
\leq& C\int_0^T \|\xi_{u}\|_{L^2(\Omega_h)}^2 +\|\xi_{w}\|_{L^2(\Omega_h)}^2+ \|\xi_{ut}\|_{L^2(\Omega_h)}^2\  dt + Ch^{2(q+1)} + \frac{1}{2}\|\xi_w({\bf x},T)\|_{L^2(\Omega_h)}^2.
	\end{align}
	Finally, applying Gronwall's inequality to (\ref{final2}) gives rise to
	\[\|\xi_{u}({\bf x},T)\|_{L^2(\Omega_h)}^2 +\|\xi_{w}({\bf x},T)\|_{L^2(\Omega_h)}^2 +\|\xi_{ut}({\bf x},T)\|_{L^2(\Omega_h)}^2 \leq Ch^{2(q+1)},\]
 and this collects the error estimate (\ref{error_est}) thanks to the triangle inequality and the property {(\ref{projrel})} of Gauss--Radau projection.
\end{proof}

\begin{remark}
{For the case $F'(\vert u\vert^2) \equiv \mbox{constant}$, one can obtain the same error estimates without assuming (\ref{prior3}). 
%
}
\end{remark}

\subsection{Verification of the a priori error estimate} \label{verification_priori}
We are now left to verify the a priori error estimate assumption (\ref{prior3}).  To see this, we first find that 
(\ref{prior3}) is true at $t = 0$ thanks to Lemma \ref{lemma3}.  To show it for all $t>0$, we argue by contradiction. Suppose that (\ref{prior3}) fails before $T$, there exist some $t_\ast\in(0,T)$ such that $t_\ast = \mbox{inf}\{t:\Vert(u - u_h)(\cdot,t)\Vert_{L^2(\Omega_h)}  + \Vert(u_t - u_{ht})(\cdot,t)\Vert_{L^2(\Omega_h)} > h\}$.
By the continuity of $\|(u-u_h)(\cdot,t)\|_{L^2(\Omega_h)} + \|(u_t-u_{ht})(\cdot,t)\|_{L^2(\Omega_h)}$, we have $h = \|(u - u_h )(\cdot,t_\ast )\|_{L^2(\Omega_h)} + \|(u_t - u_{ht})(\cdot,t_\ast )\|_{L^2(\Omega_h)}$. On the other hand, $(\ref{prior3})$ holds for $0 \leq t \leq t_\ast$, thus from Theorem \ref{them2}, we have $\|(u - u_h)(\cdot,t_\ast )\|_{L^2(\Omega_h)} +  \|(u_t - u_{ht})(\cdot,t_\ast )\|_{L^2(\Omega_h)}\leq Ch^{q+1}$
, which is a contradiction if $q \geq 1$. Therefore, we have $\|(u - u_h)(\cdot,t)\|_{L^2(\Omega_h)}  + \|(u_t - u_{ht})(\cdot,t)\|_{L^2(\Omega_h)}\leq h$ for all $0 \leq t \leq T$. Now we have completed the verification of (\ref{prior3}). 

\section{Time Discretization}\label{sec_time_dis}
{In this section, we extend the semi-discrete ultra-weak local DG method to the fully discrete method which also conserves the discrete mass and the discrete Hamiltonian.
	
\subsection{Crank--Nicolson time discretization}\label{sec_cn}
In this section, we discuss the Crank--Nicolson time scheme and show the mass and the Hamiltonian conservation properties of the corresponding fully time discrete scheme. Let $0 = t_0 < t_1 < \cdots < t_n < \cdots < \cdots < t_{N_t} = T$ and denote $h_t := t_{n+1} - t_n$. Here we use the uniform time step $h_t$ and denote by $u_h^n$ the DG solution at $t = t_n$. We also introduce the following two operators which will be used throughout the rest of the contents
\[ \mbox{central difference operator}: \delta u^n = \frac{u^{n+1} - u^{n-1}}{2h_t},\]
\[\mbox{average value operator}: \bar{\delta} u^n = \frac{u^{n+1} + u^{n-1}}{2}.\]
The fully discrete approximation $u^n_h = u(\cdot,t_n)$ of problem (\ref{system_2nd}) is given as follows

\begin{align}
	 \int_{K} i\delta u_h^n \phi + \bar{\delta} w_h^n \Delta \phi + \bar{\delta} u_h^n \mathcal{G}\phi\ d{\bf x} &=\int_{\partial K} -  \bar{\delta}\widetilde{\nabla w_{h}^n}\cdot{\bf n} \phi + \bar{\delta}\widetilde{w_h^n}\nabla\phi\cdot{\bf n}\ dS,\label{fully1} \\
	\int_{K} w^{n+1}_h \psi - u_h^{n+1}\Delta \psi\ d{\bf x} &= \int_{\partial K}\widehat{\nabla u_{h}^{n+1}}\cdot{\bf n}\psi  - \widehat{u_{h}^{n+1}}\nabla\psi\cdot{\bf n}\ dS,\label{fully2}\\
	 \int_{K} w^{n-1}_h \psi - u_h^{n-1}\Delta \psi\ d{\bf x} &= \int_{\partial K}\widehat{\nabla u_{h}^{n-1}}\cdot{\bf n}\psi - \widehat{u_{h}^{n-1}}\nabla \psi\cdot{\bf n}\ dS,\label{fully3}
	\end{align}
	for all test functions $\phi, \psi\in V_h^q,$ where $\mathcal{G} = \frac{F(|u_h^{n+1}|^2)- F(|u^{n-1}_h|^2)}{|u_h^{n+1}|^2 - |u_h^{n-1}|^2} $ and the numerical fluxes are defined in (\ref{fluxa}). We then have the following conservation property.
	\begin{truth}
	 For all $n$, the solution to the fully discrete ultra-weak LDG scheme (\ref{fully1})--(\ref{fully3}) conserves the discrete mass
		\begin{equation}\label{discrete_mass}
		M_h^{n+1} := \frac{1}{2}\sum_K\int_{K} \vert u_h^n\vert + \vert u_h^{n+1}\vert\ d{\bf x}, 
		\end{equation}
		and the discrete Hamiltonian
		\begin{equation}\label{discrete_H}
		H_h^{n+1} := \frac{1}{2}\sum_K\int_{K} \vert w_h^n\vert + \vert w_h^{n+1}\vert  + F(|u_h^{n+1}|^2) + F(|u_h^n|^2)\  d{\bf x}.
		\end{equation}
	\end{truth}
	\begin{proof}
		To prove the fully discrete mass conservation (\ref{discrete_mass}), we choose the test function $\phi = \bar{\delta} u_h^{n,\ast}$ in (\ref{fully1}) to obtain
		\begin{equation}\label{fully11}
		\int_{K} i\delta u_h^n \bar{\delta} u_h^{n,\ast} + \bar{\delta} w_h^n \Delta\bar{\delta}  u^{n,\ast} + \bar{\delta} u_h^n \mathcal{G} \bar{\delta} u_h^{n,\ast}\ d{\bf x}
		= \int_{\partial K}- \bar{\delta}\widetilde{\nabla w_{h}^n}\cdot{\bf n} \bar{\delta} u_h^{n,\ast}+ \bar{\delta}\widetilde{w_h^n}\nabla\bar{\delta}  u_{h}^{n,\ast}\cdot{\bf n}\ dS, 
		\end{equation}
		and the test function $\psi = \bar{\delta} w_h^{n,\ast}/2$ in (\ref{fully2}) and (\ref{fully3}) to generate
{
		\begin{equation}\label{full2_1}
	\int_{K} (w^{n+1}_h \bar{\delta} w_h^{n,\ast} - u_h^{n+1}\Delta\bar{\delta}  w_{h}^{n,\ast})/2 \ d{\bf x}
		=\int_{\partial K} (\widehat{\nabla u_{h}^{n+1}}\cdot{\bf n}\bar{\delta} w_h^{n,\ast} - \widehat{u_{h}^{n+1}}\nabla\bar{\delta} w_{h}^{n,\ast}\cdot{\bf n})/2\ dS,
		\end{equation}
		and
		\begin{equation}\label{fully3_1}
	\int_{K} (w^{n-1}_h \bar{\delta} w_h^{n,\ast} - u_h^{n-1}\Delta\bar{\delta}  w_{h}^{n,\ast})/2 \ d{\bf x}
		=\int_{\partial K} (\widehat{\nabla u_{h}^{n-1}}\cdot{\bf n}\bar{\delta} w_h^{n,\ast} - \widehat{u_{h}^{n-1}}\nabla\bar{\delta} w_{h}^{n,\ast}\cdot{\bf n})/2\ dS,
		\end{equation}}
		Adding (\ref{full2_1}) to (\ref{fully3_1}), we have
		\begin{equation}\label{fully23}
		\int_{K} \bar{\delta} w^n_h \bar{\delta} w_h^{n,\ast} - \bar{\delta} u^n_h\Delta\bar{\delta} w_{h}^{n,\ast} \ d{\bf x} 
		= \int_{\partial K} \bar{\delta}\widehat{\nabla u_{h}^n}\cdot{\bf n}\bar{\delta} w_h^{n,\ast} - \bar{\delta}\widehat{u_{h}^n}\nabla\bar{\delta} w_{h}^{n,\ast}\cdot{\bf n}\ dS
		\end{equation}
		For the resulting equations (\ref{fully11}) and (\ref{fully23}), by the same analysis that leads to the conservation of the semi-discrete mass (\ref{M}) in Section \ref{sec:conservation}, we can obtain
		\begin{equation}\label{discrete_M_eq}
		2\times \frac{1}{2h_t} \bigg[ \sum_K\int_{K} \frac{\vert u_h^{n+1}\vert + \vert u_h^{n}\vert}{2} - \frac{\vert u_h^n\vert + \vert u_h^{n-1}\vert}{2}\ d{\bf x}\bigg] = 0.
		\end{equation}
		Combining (\ref{discrete_M_eq}) and the definition of $M_h^n$ in (\ref{discrete_mass}), we have $M_h^{n+1} = M_h^{n}$ for all $n$. This illustrates
		the fully discrete mass is conserved by using the fully discrete scheme (\ref{fully1})--(\ref{fully3}).
		
		For the fully discrete Hamiltonian conservation (\ref{discrete_H}), we let the test function $\phi = \delta u_h^{n,\ast}$ in (\ref{fully1}) to get
		\begin{equation}\label{fully1_2}
		\int_{K} i\delta u_h^n \delta u_h^{n,\ast} + \bar{\delta} w_h^n \Delta \delta u_{h}^{n,\ast} + \bar{\delta} u_h^n \mathcal{G} \delta u_h^{n,\ast}\ d{\bf x}
		=\int_{\partial K} - \bar{\delta}\widetilde{\nabla w_{h}^n} \delta u_h^{n,\ast}+ \bar{\delta}\widetilde{w_h^n}\nabla\delta u_{h}^{n,\ast}\cdot{\bf n}\ dS,
		\end{equation}
		In (\ref{fully2}) and (\ref{fully3}), we choose the test function $\psi = \bar{\delta}w^{n,\ast}_h$ to obtain
		\begin{equation}
		\int_{K} w^{n+1}_h \bar{\delta} w_h^{n,\ast}- u_h^{n+1}\Delta\bar{\delta} w_{h}^{n,\ast}\ d{\bf x} 
		=\int_{\partial K}   \widehat{\nabla u_{h}^{n+1}}\cdot{\bf n} \bar{\delta} w_h^{n,\ast}- \widehat{u_{h}^{n+1}}\nabla \bar{\delta} w_{h}^{n,\ast}\cdot{\bf n}\ dS,\label{fully2_2}
		\end{equation}
		\begin{equation}
		\int_{K} w^{n-1}_h \bar{\delta} w_h^{n,\ast}- u_h^{n-1}\Delta\bar{\delta} w_{h}^{n,\ast}\ d{\bf x} 
		=\int_{\partial K}   \widehat{\nabla u_{h}^{n-1}}\cdot{\bf n} \bar{\delta} w_h^{n,\ast}- \widehat{u_{h}^{n-1}}\nabla \bar{\delta} w_{h}^{n,\ast}\cdot{\bf n}\ dS.\label{fully3_2}
		\end{equation}
		Subtracting (\ref{fully2_2}) from (\ref{fully3_2}) and dividing the resulting equation by $2h_t$ yields
		\begin{equation}\label{discrete_H_1}
		\int_{K} \delta w^{n}_h \bar{\delta} w_h^{n,\ast} - \delta u_h^{n}\Delta\bar{\delta} w_{h}^{n,\ast}\ d{\bf x}
		= \int_{\partial K} \delta\widehat{\nabla  u_{h}^{n}}\cdot{\bf n}\bar{\delta} w_h^{n,\ast} - \delta\widehat{u_{h}^{n}}\nabla \bar{\delta} w_{h}^{n,\ast}\cdot{\bf n}\ dS.
		\end{equation}
		For the resulting equations (\ref{fully1_2}) and (\ref{discrete_H_1}), by utilizing the same analysis for the conservation of the semi-discrete Hamiltonian (\ref{E}) in Section \ref{sec:conservation}, we arrive at
		\begin{multline}\label{fully_H1}
		2\times\frac{1}{2\Delta t} \Big[\sum_K\int_{K} \frac{\vert w_h^{n+1}\vert + \vert w_h^{n}\vert}{2} - \frac{\vert w_h^n\vert + \vert w_h^{n-1}\vert}{2} \\+ \frac{F(|u_h^{n+1}|^2) + F(|u_h^n|^2)}{2} - \frac{F(|u_h^{n}|^2) + F(|u_h^{n-1}|^2)}{2}\ d{\bf x}\Big] = 0.
		\end{multline}
		From (\ref{fully_H1}) and the definition of $H_h^n$ in (\ref{discrete_H}), we have $H_h^{n+1} = H_h^{n}$ for all $n$, and this verifies that the fully discrete Hamiltonian is conserved by using the fully discrete scheme (\ref{fully1})--(\ref{fully3}).
	\end{proof}
	
	Note that the fully discrete scheme (\ref{fully1}) -- (\ref{fully3}) results in the following nonlinear algebraic equation
	\[{\bf U}^{n+1} = \mathcal{L}({\bf U}^{n-1}, {\bf U}^{n+1}) + \mathcal{N}({\bf U}^{n-1}, {\bf U}^{n+1}),\]
	where ${\bf U}$ containing the degrees of freedom for $u_h$, $\mathcal{L}({\bf U}^{n-1}, {\bf U}^{n+1}) $ is a linear function of ${\bf U}^{n-1}, {\bf U}^{n+1}$, and $\mathcal{N}({\bf U}^{n-1}, {\bf U}^{n+1})$ is a nonlinear function with respect to ${\bf U}^{n-1}, {\bf U}^{n+1}$. In the implementation, we use Newton's method to find ${\bf U}^{n+1}$ for each time level $t_{n+1}$. Since the second order central difference is used on time discretization and we are mainly concerned the effect of the spatial discretization, we use the time step $h_t = \mbox{cfl}\times h^4$ to guarantee that the error will be dominated by the spatial discretization when using the Crank--Nicolson time integrator for the numerical experiments.
	
	In what follows, we also present another popular time-stepping algorithm for the semi-discrete problem and compare the mass and the Hamiltonian evolution history in the numerical experiments with the fully discrete scheme coupled with the Crank--Nicolson time scheme proposed in this section.
}

\subsection{The spectral deferred correction (SDC) time-stepping algorithm} 
{We now describe} an SDC method to solve the semi-discrete problem generated by scheme (\ref{DG_scheme1_v2})--(\ref{DG_scheme2_v2}).  This method builds on the low-order time-stepping scheme, and then iterative corrections on a defect equation to obtain the desired order of accuracy (see e.g., \cite{dutt2000spectral,hagstrom2007spectral}).  We extend \cite{zhang2021beam} by applying it to the nonlinear problems in this paper.  In what follows, we present the SDC algorithm for the problems with both linear and nonlinear terms for completeness.  An essential step for this purpose is to use an implicit method for the linear terms but an explicit method for the nonlinear terms. 

To illustrate this idea, let us consider a generic ODE system as follows
\begin{equation*}
\left\{
\begin{aligned}
&y_t = Ay + G(y), \ \ t\in(0,T],\\
&y(0) = y_0,
\end{aligned}
\right.
\end{equation*}
where $y_0, y(t)\in \mathbb{C}^l$, $A\in M_{l\times l}(\mathbb{C})$  and $G: \mathbb{C}^l \rightarrow \mathbb{C}^l$ is a nonlinear function. Suppose the time interval $[0,T]$ is partitioned into $N_t$ subintervals as $0 = t_0 < t_1 < \cdots < t_n < \cdots < \cdots < t_{N_t} = T$. Denote $h_t^n := t_{n+1} - t_n$ and $y_n := y(t_n)$, then our SDC time stepping algorithm proceeds as follows:

\setalgorithmicfont{\footnotesize}

\begin{algorithm}
	\caption{SDC time stepping algorithm }
	\begin{algorithmic}[1]
		\STATE Input: $y^1_{n,0} = y_n$, $t_n$, $t_{n+1}$, $m$, $J$
		\STATE Compute $m$ Gauss--Radau points $\tau_i\in (t_n, t_{n+1}]$ and set $\tau_0 = t_n$, $k_i = \tau_i - \tau_{i-1}$
		\FOR{$i = 1, \cdots, m$}
		\STATE Solve $y_{n,i}^1 = y_{n,i-1}^1 + k_i\left(Ay_{n,i}^1 + G(y_{n,{i-1}}^1)\right)$,\COMMENT{Compute the initial approximation}
		\ENDFOR
		\FOR{$j = 1, \cdots, J$}
		\STATE Initialize $\epsilon_{n,0}^j = 0$, $\delta_{n,0}^j = 0$
		\FOR{$i = 1, \cdots, m$}
		\STATE Compute $\epsilon_{n,i}^j = y_n - y_{n,i}^{j} + I_{n,0}^{n,i} (Ay^{j}(\tau)+G(y^j(\tau)))$
		\STATE Compute intermediate function value $\bar{y} = y_{n,i-1}^j + \delta_{n,i-1}^j$
		\STATE Compute intermediate nonlinear function value $\bar{G} = G(\bar{y})$
		\STATE Solve $\delta_{n,i}^{j} = \delta_{n,i-1}^j + k_iA\delta_{n,i}^j + (\epsilon_{n,i}^j - \epsilon_{n,i-1}^j) + k_i\left(G(\bar{y})- G(y_{n,i-1}^j)\right)$
		\STATE Update $y_{n,i}^{j+1} = y_{n,i}^j + \delta_{n,i}^j$
		\ENDFOR
		\ENDFOR
		\STATE \textbf{return} $y^{J+1}_{n,m}$
	\end{algorithmic}
\end{algorithm}
{In this algorithm, $I_{n,0}^{n,i}(Ay^j(\tau) + G(y^j(\tau)))$ denotes the integral of the $(m-1)$-th degree interpolating polynomial on the $m$ nodes $(\tau_i, Ay_{n,i}^j + G(y_{n,i}^j))_{i = 1}^m$ over the subinterval $[t_n, \tau_i]$, and it is the numerical quadrature approximation of $\int_{t_n}^{\tau_i} Ay^j(\tau) + G(y^j(\tau)) d\tau$.} When the SDC scheme is used, we set $m = 5$ and $J = 15$ so that the convergence order in time ($2m-1$) is larger than the convergence order in space ($4th$ order in space) and also use a uniform time step $h_t = 0.025$.


\section{Numerical Simulations}\label{sec_numerical}

In this section, we present several numerical experiments to illustrate and support the convergence of the proposed DG scheme in Section \ref{sec_formula}. Through these studies, We use a standard modal basis formulation and the alternating flux (\ref{fluxa}) for the conciseness of demonstration.


 \subsection{Linear problem in one dimensional space}\label{liner_sec}
We first consider the biharmonic Schr\"{o}dinger equation with $F'(|u|^2) = 1$, 
\begin{equation}\label{eg1}
iu_t + u_{xxxx} + u = 0, \ \ (x,t) \in (0, 4\pi)\times(0, 1],
\end{equation} 
subject to periodic boundary condition and initial condition $u(x,0) = \cos(x) + i \sin(x)$. Note that this PDE has the following exact solution 
\[u(x,t) = \cos(x + 2t) + i\sin(x + 2t).\]

We uniformly discretize the spatial interval through vertices $x_j = jh$, $j = 0,\cdots ,N$, $h = 4\pi/N$.   Throughout the studies we present results by considering the degree of the approximation space of $u_h$ and $w_h$ being $q = (1,2,3)$.

From Table \ref{linear_u} to Table \ref{linear_w}, we present the $L^2$ and $L^\infty$ errors for the real and imaginary parts of $u$ and $w$, respectively.  We also include the corresponding numerical orders of accuracy subject to the variation of $q$ and $N$. {There are several conclusions we can make out from these tables.}  First of all, the proposed scheme consistently gives the optimal $(q + 1)$-th order of accuracy across the choices of size $N$ and the error norms.  Moreover, there are infinitesimal differences between the $L^2$ errors of the real and imaginary parts of both $u$ and $w$.  Indeed, their differences are at the order of $10^{-12}$ and we skip presenting them herein.  
			
\begin{table}
 	\footnotesize
 	\begin{center}
 		\scalebox{1.0}{
 			\begin{tabular}{c c c c c c c c c c}
 				\hline
 				~ & ~ &  $\mbox{Re}(u)$ &~ &~&~& $\mbox{Im}(u)$& ~ & ~ & ~\\
 				\cline{3-6} \cline{7-10}
 				$q$ & $N$ & $L^2$ error & order & $L^\infty$ error & order  & $L^2$ error & order & $L^\infty$ error & order \\
 				\hline
 				1& 10 & 1.01e-00& --& 4.47e-01& -- &1.01e-00 & -- & 4.54e-01 & --\\
 				~& 20 & 3.28e-01& 1.62& 1.47e-01& 1.60& 3.28e-01 & 1.62 & 1.43e-01 & 1.67\\
 				~& 40 & 8.78e-02& 1.90& 3.71e-02& 1.99&8.78e-02  & 1.90 & 3.71e-02 & 1.94\\
 				~& 80 & 2.23e-02& 1.97& 9.20e-03& 2.01&2.23e-02  & 1.97 & 9.20e-03 & 2.01\\
 				~& 160 & 5.61e-03& 1.99& 2.27e-03& 2.02&5.61e-03  & 1.99 & 2.27e-03 & 2.02\\
 				~ & ~ & ~ & ~ & ~ & ~ & ~ & ~ & ~ & ~\\
 				2& 10 & 6.68e-02& --& 3.06e-02& -- &6.68e-02 & -- & 2.98e-02 & --\\
 				~& 20 & 7.57e-03& 3.14& 3.89e-03& 2.97& 7.57e-03 & 3.14 & 3.76e-03 & 2.99\\
 				~& 40 & 9.24e-04& 3.03& 5.19e-04& 2.91&9.24e-04  & 3.03 & 5.19e-04 & 2.86\\
 				~& 80 & 1.15e-04& 3.01& 6.73e-05& 2.95&1.15e-04  & 3.01 & 6.73e-05 & 2.95\\
 				~& 160 & 1.43e-05& 3.00& 8.57e-06& 2.97&1.43e-05  & 3.00 & 8.57e-06 & 2.97\\
 				~ & ~ & ~ & ~ & ~ & ~ & ~ & ~ & ~ & ~\\
 				3& 10 & 4.06e-03& --& 2.35e-03& -- &4.06e-03 & -- & 2.27e-03 & --\\
 				~& 20 & 2.49e-04& 4.03& 1.39e-04& 4.08& 2.49e-04 & 4.03 & 1.44e-04 & 3.98\\
 				~& 40 & 1.55e-05& 4.01& 8.87e-06& 3.97&1.55e-05  & 4.01 & 8.87e-06 & 4.02\\
 				~& 80 & 9.67e-07& 4.00& 5.54e-07& 4.00&9.67e-07  & 4.00 & 5.54e-07 & 4.00\\
 				~& 160 & 6.06e-08& 4.00& 3.50e-08& 3.98&6.06e-08  & 4.00 & 3.57e-08 & 3.96\\
 				\hline
 			\end{tabular}
 		}
 	\end{center}
 	\caption{\scriptsize{We present the $L^2/L^\infty$ errors and the corresponding convergence rates for $u$ (the real part $\mbox{Re}(u)$ and the imaginary part $\mbox{Im}(u)$) for problem (\ref{eg1}) using $\mathcal{P}^q$ polynomials.  The interval is divided into $N$ uniform cells, and the terminal computational time $T = 1$.}  These results present the optimal convergence which is robust to the error norm.}\label{linear_u}
 \end{table}

 \begin{table}
 	\footnotesize
 	\begin{center}
 		\scalebox{1.0}{
 			\begin{tabular}{c c c c c c c c c c}
 				\hline
 				~ & ~ &  $\mbox{Re}(w)$ &~ &~&~& $\mbox{Im}(w)$& ~ & ~ & ~\\
 				\cline{3-6} \cline{7-10}
 				$q$ & $N$ & $L^2$ error & order & $L^\infty$ error & order  & $L^2$ error & order & $L^\infty$ error & order \\
 				\hline
 				1& 10 & 1.37e-00& --& 5.49e-01& -- &1.37e-00 & -- & 5.76e-01 & --\\
 				~& 20 & 3.19e-01& 2.10& 1.58e-01& 1.79& 3.19e-01 & 2.10 & 1.54e-01 & 1.90\\
 				~& 40 & 9.10e-02& 1.81& 4.33e-02& 1.87&9.10e-02  & 1.81 & 4.33e-02 & 1.83\\
 				~& 80 & 2.39e-02& 1.93& 1.11e-02& 1.96&2.39e-02  & 1.93 & 1.11e-02 & 1.96\\
 				~& 160 & 6.08e-03&1.97& 2.80e-03& 1.99 &6.08e-03  & 1.97 & 2.80e-03 & 1.99\\
 				~ & ~ & ~ & ~ & ~ & ~ & ~ & ~ & ~  & ~\\
 				2& 10 & 7.39e-02& --& 4.92e-02& -- &7.39e-02 & -- & 4.74e-02 & --\\
 				~& 20 & 7.58e-03& 3.29& 5.01e-03& 3.30& 7.58e-03 & 3.29 & 4.83e-03 & 3.29\\
 				~& 40 & 9.24e-04& 3.04& 5.95e-04& 3.07&9.24e-04  & 3.04 & 5.95e-04 & 3.02\\
 				~& 80 & 1.15e-04& 3.01& 7.22e-05& 3.04&1.15e-04  & 3.01 & 7.22e-05 & 3.04\\
 				~& 160 & 1.43e-05&3.00& 8.88e-06& 3.02&1.43e-05  & 3.00 & 8.88e-06 & 3.02\\
 				~ & ~ & ~ & ~ & ~ & ~ & ~ & ~ & ~ & ~\\
 				3& 10 & 4.06e-03& --& 2.43e-03& -- &4.06e-03 & -- & 2.50e-03 & --\\
 				~& 20 & 2.49e-04& 4.03& 1.44e-04& 4.08& 2.49e-04 & 4.03 & 1.43e-04 & 4.13\\
 				~& 40 & 1.55e-05& 4.01& 8.91e-06& 4.02 &1.55e-05  & 4.01 & 8.91e-06 & 4.00\\
 				~& 80 & 9.67e-07& 4.00& 5.54e-07& 4.01&9.67e-07  & 4.00 & 5.54e-07 & 4.01\\
 				~& 160 & 6.06e-08& 4.00& 3.57e-08& 3.96&6.06e-08  & 4.00 & 3.53e-08 & 3.97\\
 				\hline
 			\end{tabular}
 		}
 	\end{center}
 	\caption{\scriptsize{This table presents the errors and the corresponding convergence rates for $w$ in the problem (\ref{eg1}).  All are chosen to be the same as in Table \ref{linear_u}.}}\label{linear_w}
 \end{table}


 The numerical mass and the numerical Hamiltonian trajectories of the proposed ultra-weak LDG scheme for the problem (\ref{eg1}) are presented in Figure \ref{mass_hamitonian_linear} with both SDC and Crank--Nicolson time integrators.  In particular, we show the results for the approximation degree $q = 2$ until the final time $T = 100$ with $N = 40$. We note that the numerical mass and Hamiltonian are conserved by the conservative scheme (Crank--Nicolson time integrator).  Though the numerical mass and Hamiltonian are not conserved by the SDC time integrator, the magnitude of the numerical mass error is smaller than $10^{-5}$ and the numerical Hamiltonian error is smaller than $10^{-3}$.

\begin{figure}[htbp]
	\centering
	\includegraphics[width=0.45\textwidth]{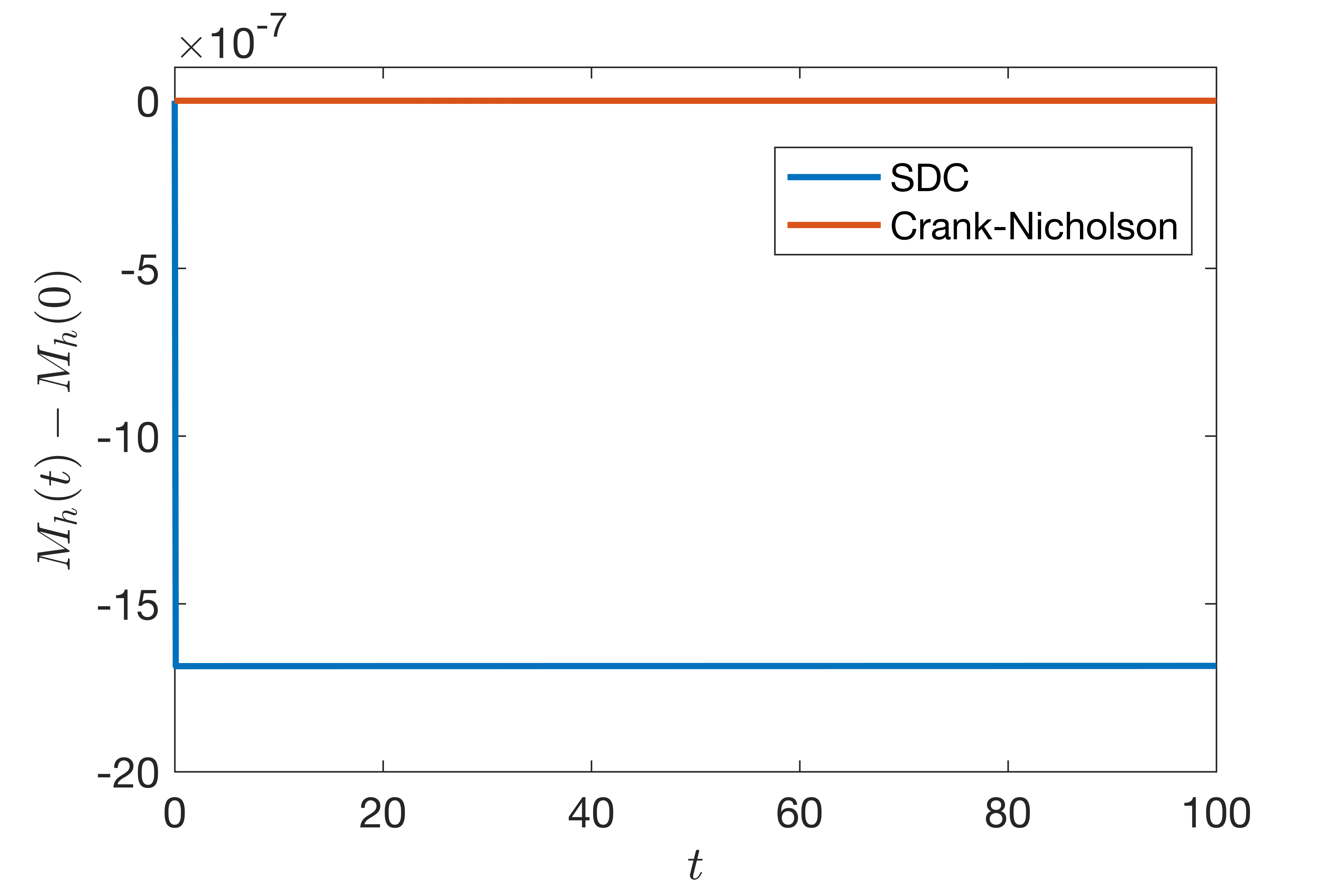}
	\includegraphics[width=0.45\textwidth]{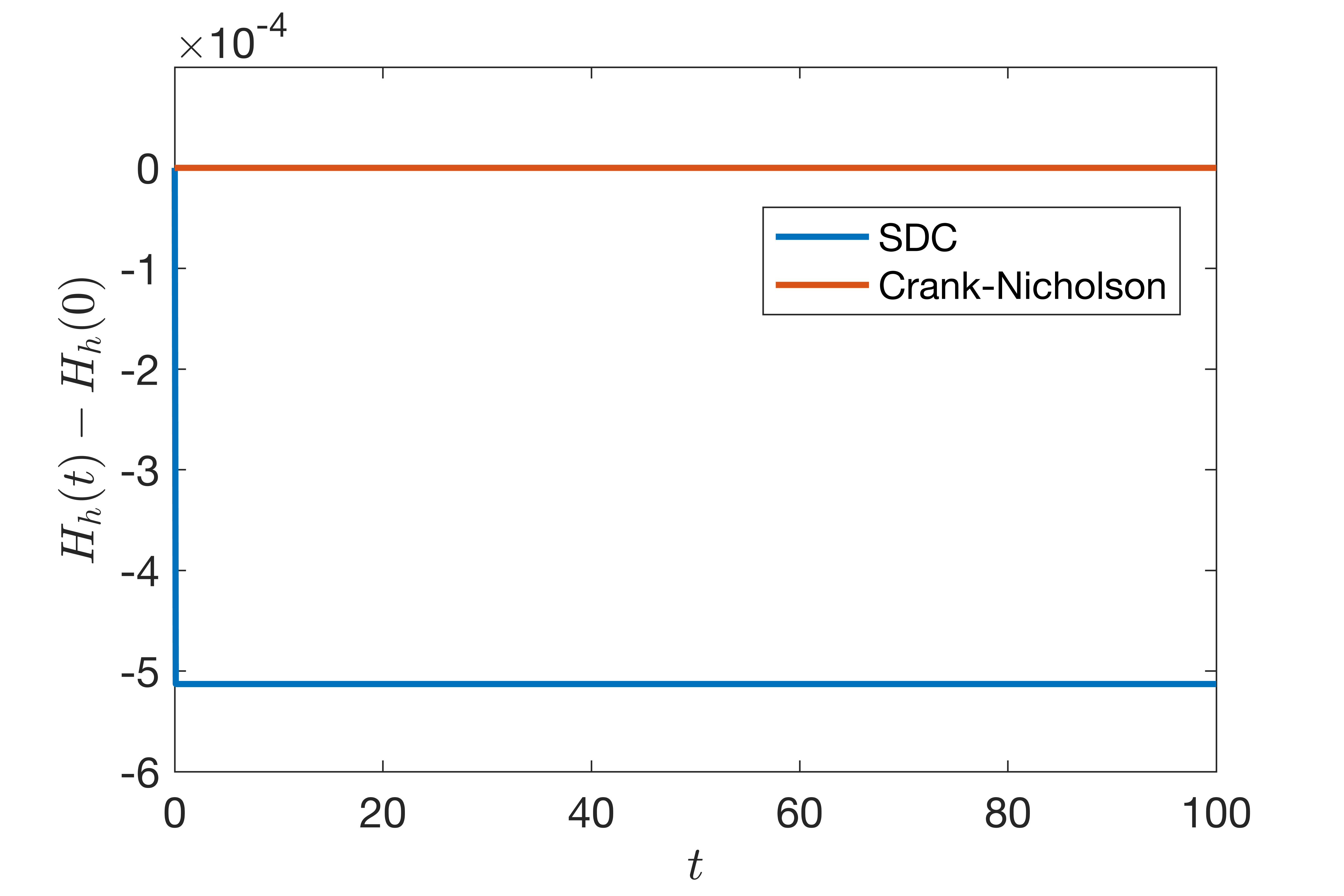}
	\caption{The left plots $M_h(t)-M_h(0)$ for problem (\ref{eg1}) using $\mathcal{P}^2$ polynomial on a uniform mesh of $N = 40$ up to a terminal time $T=100$, while the right plots $H_h(t)-H_h(0)$ under the same setting.
	}\label{mass_hamitonian_linear}
\end{figure}

 
 \subsection{Defocusing nonlinear problem in one dimensional space}\label{defocusing_sec}
  We provide another set of studies that examine the effectiveness and theoretical convergence order of the proposed ultra-weak LDG scheme for a defocusing nonlinear biharmonic Schr\"{o}dinger equation with $F'(|u|^2) = e^{|u|^2}$, that is,
 \begin{equation}\label{defocusing}
 iu_{t} + u_{xxxx} + ue^{|u|^2} = f(x,t),\ \ (x,t)\in(0, 4\pi)\times(0,1],
 \end{equation}
 subject to periodic boundary conditions and with initial data, external forcing $f(x,t)$ such that the exact solution is given by
 \[u(x,t) = \cos(x + t) + i\sin(x + t).\]
 We use the same spatial discretization as those in Section \ref{liner_sec} and display the $L^2$ and $L^\infty$ errors for the real part and the imaginary part of both $u$ and $w$ from Tables \ref{u_defocusing} to Table \ref{w_defocusing}. We observe similar results as those for the linear biharmonic Schr\"{o}dinger equations in Section \ref{liner_sec}. Specifically, we note an optimal convergence $q + 1$ for both the real part and the imaginary part of $u$ and $w$. 
 

Finally, Figure \ref{uw_non} presents the snapshots of the numerical solution $u_h$ and $w_h$ at $t = 1000$. Here, we choose the approximation degree $q = 2$ and the number of cells $N = 80$.  From this figure, we observe that our numerical solutions match very well with the exact solution.

  \begin{table}
 	\footnotesize
 	\begin{center}
 		\scalebox{1.0}{
 			\begin{tabular}{c c c c c c c c c c c}
 				\hline
 				~ & ~ &  $\mbox{Re}(u)$ &~ &~ &~&~& $\mbox{Im}(u)$& ~ & ~ & ~\\
 				\cline{3-6} \cline{8-11}
 				$q$ & $N$ & $L^2$ error & order & $L^\infty$ error & order & ~ & $L^2$ error & order & $L^\infty$ error & order \\
 				\hline
 				1& 10 & 5.30e-01& --& 2.10e-01& -- & ~ &5.30e-01 & -- & 2.18e-01 & --\\
 				~& 20 & 1.45e-01& 1.88& 5.72e-02& 1.87& ~& 1.45e-01 & 1.88 & 5.75e-02 & 1.92\\
 				~& 40 & 3.96e-02& 1.87& 1.76e-02& 1.70& ~ &3.96e-02  & 1.87 & 1.76e-02 & 1.71\\
 				~& 80 & 1.02e-02& 1.96& 4.72e-03& 1.90& ~ &1.02e-02  & 1.96 & 4.72e-03 & 1.90\\
 				~& 160 & 2.56e-03& 1.99& 1.21e-03& 1.97& ~ &2.56e-03  & 1.99 & 1.21e-03 & 1.97\\
 				~ & ~ & ~ & ~ & ~ & ~ & ~ & ~ & ~ & ~ & ~\\
 				2& 10 & 5.79e-02& --& 3.69e-02& -- & ~ &5.79e-02 & -- & 3.57e-02 & --\\
 				~& 20 & 7.39e-03& 2.97& 4.47e-03& 3.05& ~& 7.39e-03 & 2.97 & 4.66e-03 & 2.94\\
 				~& 40 & 9.18e-04& 3.01& 5.65e-04& 2.98& ~ &9.18e-04  & 3.01 & 5.65e-04 & 3.04\\
 				~& 80 & 1.15e-04& 3.00& 7.04e-05& 3.01& ~ &1.15e-04  & 3.00 & 7.04e-05 & 3.01\\
 				~& 160 & 1.43e-05& 3.00& 8.76e-06& 3.01& ~ &1.43e-05  & 3.00 & 8.76e-06 & 3.01\\
 				~ & ~ & ~ & ~ & ~ & ~ & ~ & ~ & ~ & ~ & ~\\
 				3& 10 & 3.98e-03& --& 2.31e-03& -- & ~ &3.98e-03 & -- & 2.37e-03 & --\\
 				~& 20 & 2.49e-04& 4.00& 1.44e-04& 4.01& ~& 2.49e-04 & 4.00& 1.38e-04 & 4.11\\
 				~& 40 & 1.55e-05& 4.00& 8.81e-06& 4.03& ~ &1.55e-05  & 4.00 & 8.81e-06 & 3.97\\
 				~& 80 & 9.67e-07& 4.00& 5.54e-07& 3.99& ~ &9.67e-07  & 4.00 & 5.54e-07 & 3.99\\
 				~& 160 & 6.05e-08& 4.00& 3.50e-08& 3.98& ~ &6.05e-08  & 4.00 & 3.49e-08 & 3.99\\
 				\hline
 			\end{tabular}
 		}
 			\end{center}
 	\caption{\scriptsize{Errors and the corresponding convergence rates for $u$ of problem (\ref{defocusing}) using $\mathcal{P}^q$ polynomials on a uniform mesh of $N$ cells up to terminal time $T = 1$.  This table adds additional evidence that supports the optimal and robust convergence of the proposed scheme.}}\label{u_defocusing}
 \end{table}
 
\begin{table}
	\footnotesize
	\begin{center}
		\scalebox{1.0}{
			\begin{tabular}{c c c c c c c c c c c}
				\hline
				~ & ~ &  $\mbox{Re}(w)$ &~ &~ &~&~& $\mbox{Im}(w)$& ~ & ~ & ~\\
				\cline{3-6} \cline{8-11}
				$q$ & $N$ & $L^2$ error & order & $L^\infty$ error & order & ~ & $L^2$ error & order & $L^\infty$ error & order \\
				\hline
				1& 10 & 9.52e-01& --& 4.09e-01& -- & ~ &9.52e-00 & -- & 3.94e-01 & --\\
				~& 20 & 1.90e-01& 2.33& 9.52e-02& 2.10& ~& 1.90e-01 & 2.32 & 9.06e-02 & 2.12\\
				~& 40 & 5.07e-02& 1.90& 2.64e-02& 1.84& ~ &5.07e-02  & 1.90 & 2.64e-02 & 1.78\\
				~& 80 & 1.28e-02& 1.99& 6.80e-03& 1.96& ~ &1.28e-02  & 1.99 & 6.80e-02 & 1.96\\
				~& 160 & 3.18e-03& 2.01& 1.72e-03& 1.99& ~ &3.18e-03  & 2.01 & 1.72e-03 & 1.99\\
				~ & ~ & ~ & ~ & ~ & ~ & ~ & ~ & ~ & ~ & ~\\
				2& 10 & 4.17e-02& --& 1.73e-02& -- & ~ &4.17e-02 & -- & 1.74e-02 & --\\
				~& 20 & 6.98e-03& 2.58& 3.82e-03& 2.18& ~& 6.98e-03 & 2.58 & 3.91e-03 & 2.16\\
				~& 40 & 9.06e-04& 2.94& 5.35e-04& 2.84& ~ &9.06e-04  & 2.94 & 5.35e-04 & 2.87\\
				~& 80 & 1.14e-04& 3.00& 6.89e-05& 2.96& ~ &1.14e-04  & 3.00 & 6.89e-05 & 2.96\\
				~& 160 & 1.43e-05& 3.00& 8.68e-06& 2.99& ~ &1.43e-05  & 3.00 & 8.68e-06 & 2.99\\
				~ & ~ & ~ & ~ & ~ & ~ & ~ & ~ & ~ & ~ & ~\\
				3& 10 & 3.05e-03& --& 1.47e-03& -- & ~ &3.05e-03 & -- & 1.51e-03 & --\\
				~& 20 & 2.29e-04& 3.73& 1.24e-04& 3.57& ~& 2.29e-04 & 3.73& 1.23e-04 & 3.61\\
				~& 40 & 1.52e-05& 3.92& 8.57e-06& 3.85& ~ &1.52e-05  & 3.92 & 8.57e-06 & 3.84\\
				~& 80 & 9.62e-07& 3.98& 5.49e-07& 3.96& ~ &9.62e-07  & 3.98 & 5.49e-07 & 3.96\\
				~& 160 & 6.04e-08& 3.99& 3.48e-08& 3.98& ~ &6.03e-08  & 3.99 & 3.48e-08 & 3.98\\
				\hline
			\end{tabular}
		}
	\end{center}
	\caption{\scriptsize{Errors and the corresponding convergence rates for $w$,  both real part $\mbox{Re}(w)$ and imaginary part $\mbox{Im}(w)$, in problem (\ref{defocusing}) when using $\mathcal{P}^q$ polynomials on a uniform mesh of $N(=80)$ cells.}}\label{w_defocusing}
\end{table}


\begin{figure}[htbp]
	\centering
	\includegraphics[width=0.45\textwidth]{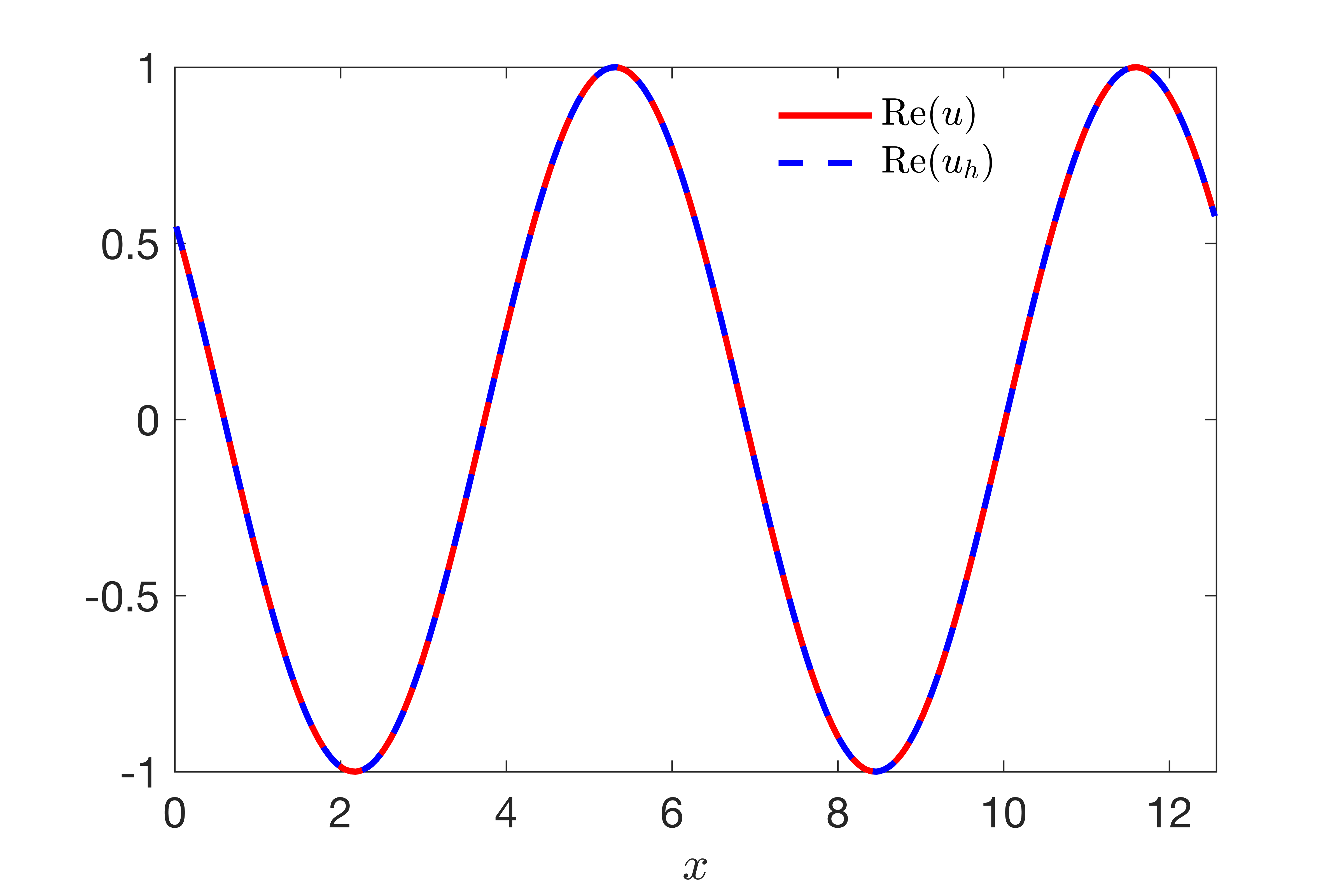}
	\includegraphics[width=0.45\textwidth]{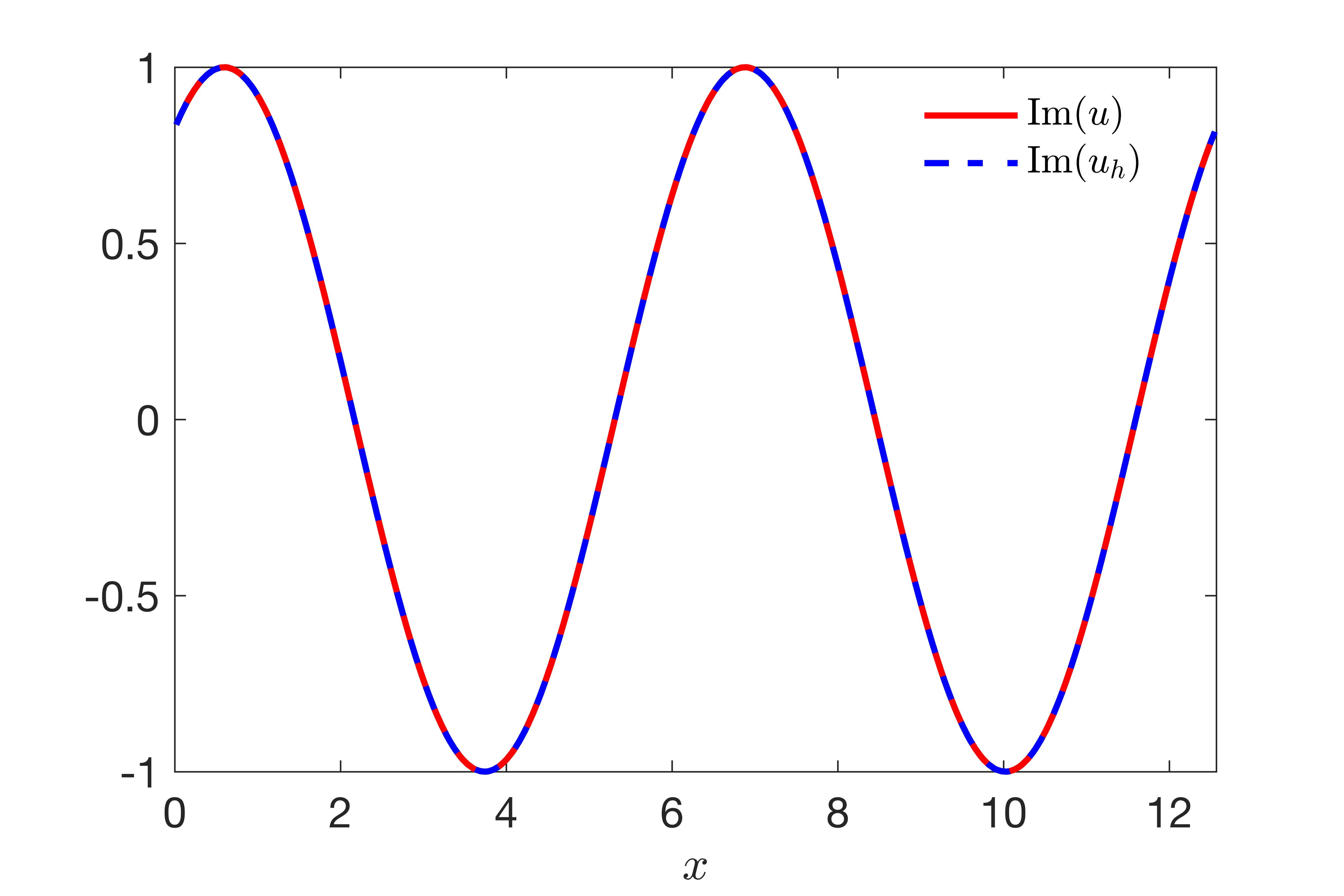}\\
	\includegraphics[width=0.45\textwidth]{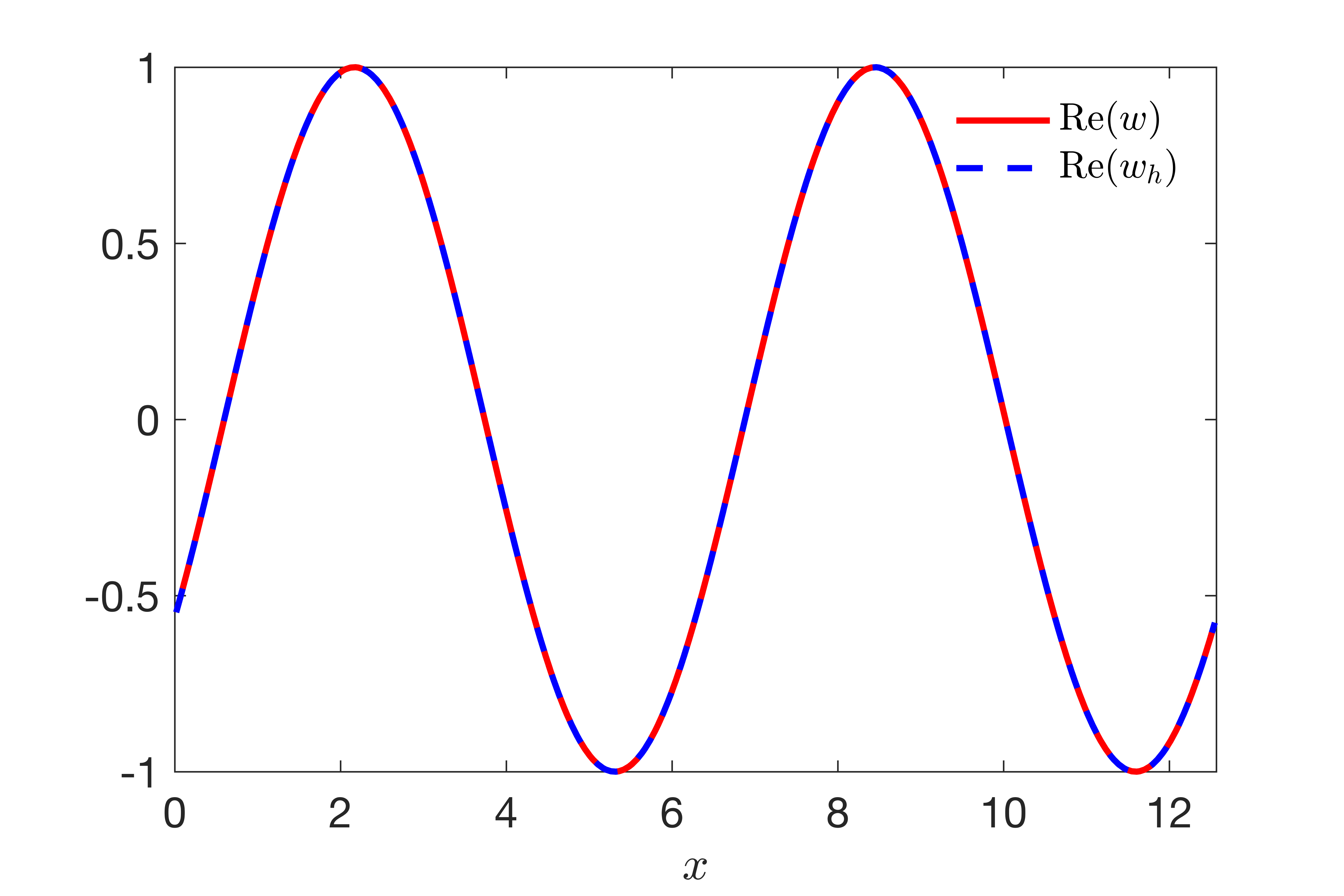}
	\includegraphics[width=0.45\textwidth]{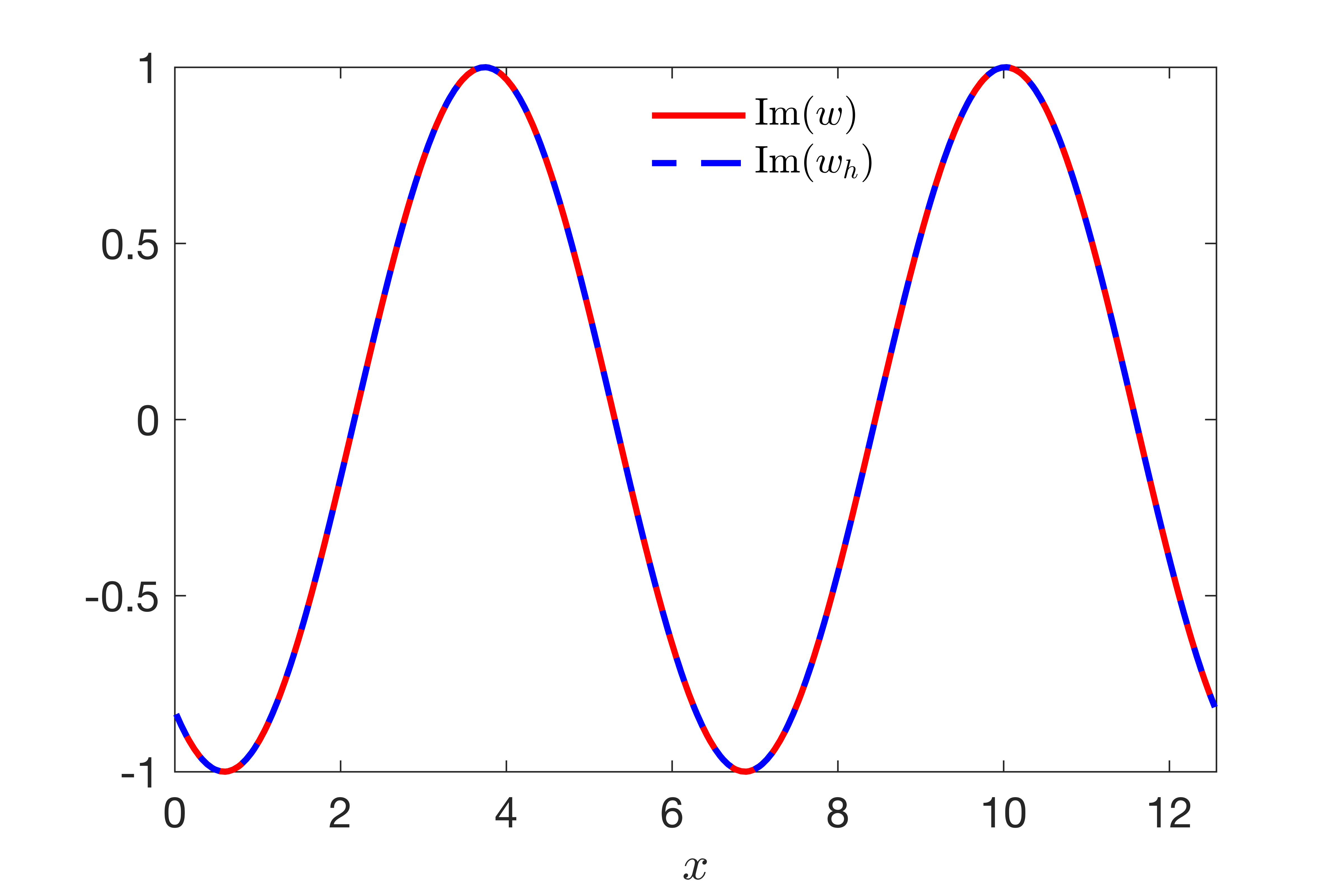}
	\caption{Snapshots of numerical and exact solutions of problem (\ref{defocusing}) at time $t = 1000$, where the numerical solutions are obtained using $\mathcal{P}^2$ polynomial on a uniform mesh of $N = 80$.  These plots provide evident support for the proposed scheme on approximating the exact solution, even up to a significantly long time.
	}\label{uw_non}
\end{figure}

\subsection{Focusing nonlinear problem in one dimensional space} 
 
 We now provide yet another set of experiments by considering nonlinear biharmonic Schr\"{o}dinger equations with an indefinite Hamiltonian.  To be specific, we test the problem
 \begin{equation*}
 iu_{t} = -u_{xxxx} - uF'(|u|^2), \ \ x\in(0,4\pi),\ \ t>0
 \end{equation*}
under two different nonlinear media, $F'(|u|^2)$,
 \[(a).\  F'(|u|^2) = -|u|^2, \ \ \ (b). \ F'(|u|^2) = -|u|^4.\]
Again, we study these problems with the same periodic boundary conditions and the following initial data as in Section \ref{defocusing_sec}
 \[u(x,0) = \cos x + i\sin x.\]
Finally, we also use the same spatial discretization as in Section \ref{liner_sec} with approximation order $q = 2$ and the number of cells $N = 80$.  Figure \ref{focusing} presents the temporal dynamics of the discrete solution $u_h$ under these different nonlinear media until $T = 100$.   From the top to the bottom we choose $F'(u) = -|u|^2$ and $F'(u) = -|u|^4$, respectively. On the top panel, we find that $u_h$ is still stable up to $t \approx 30$, however, its dynamics revolve after then and a stable time-periodic profile develops afterward.  On the bottom panel, we note a new stable time-periodic profile develops from the original solution around $t \approx 13$.
 
 \begin{figure}[htbp]
 	\centering
 	\includegraphics[width=0.45\textwidth]{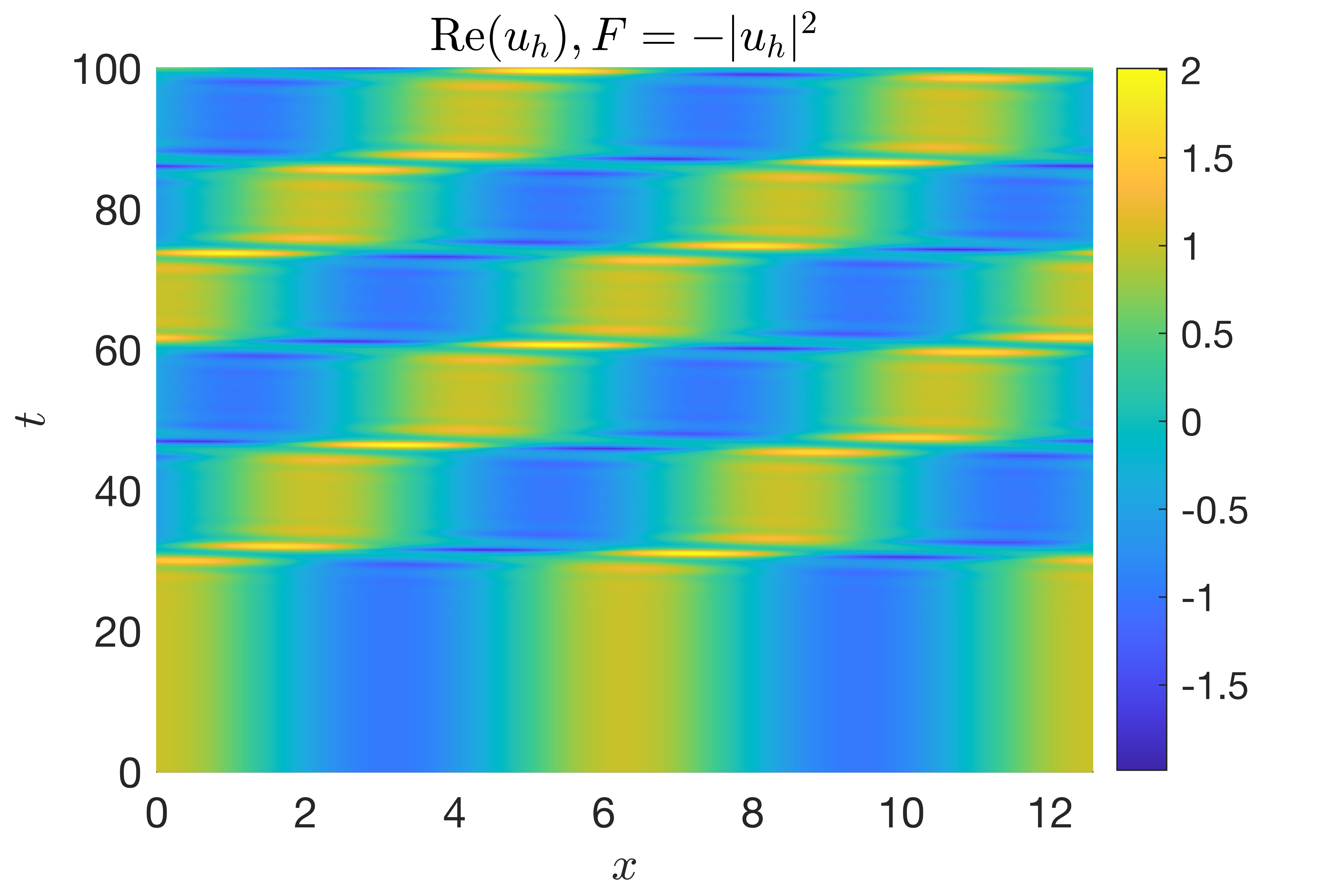}
 	\includegraphics[width=0.45\textwidth]{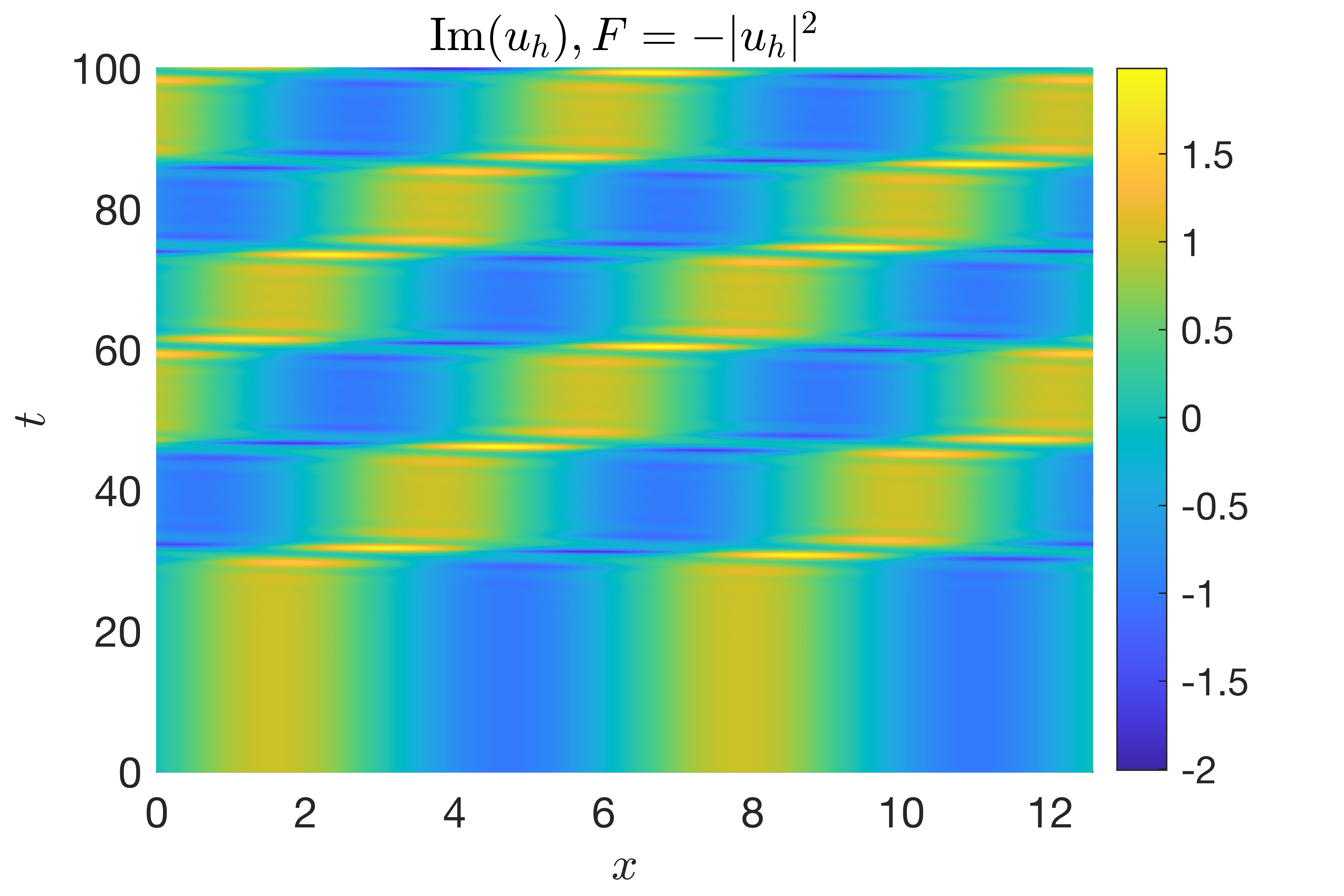}\\
 		\includegraphics[width=0.45\textwidth]{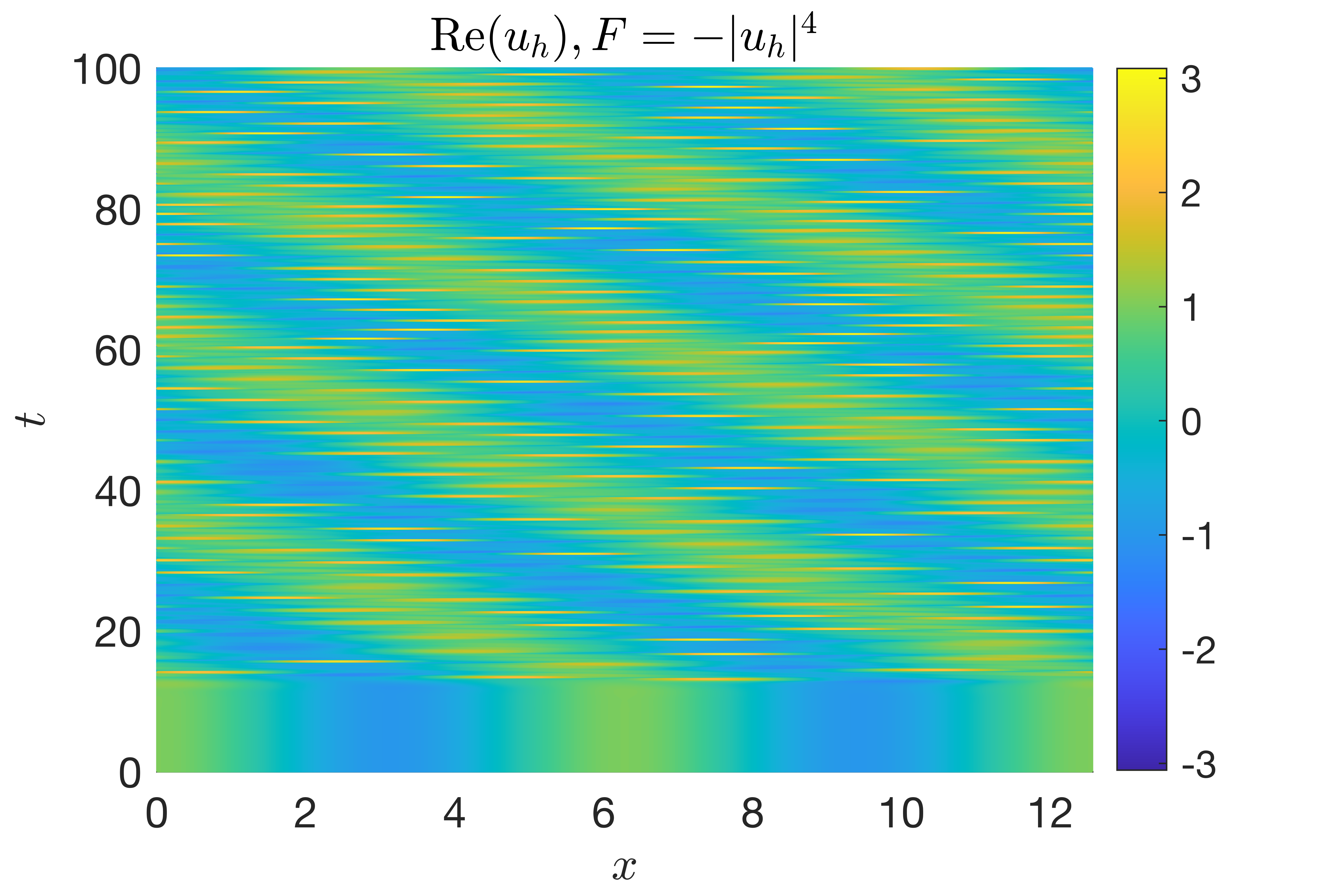}
 	\includegraphics[width=0.45\textwidth]{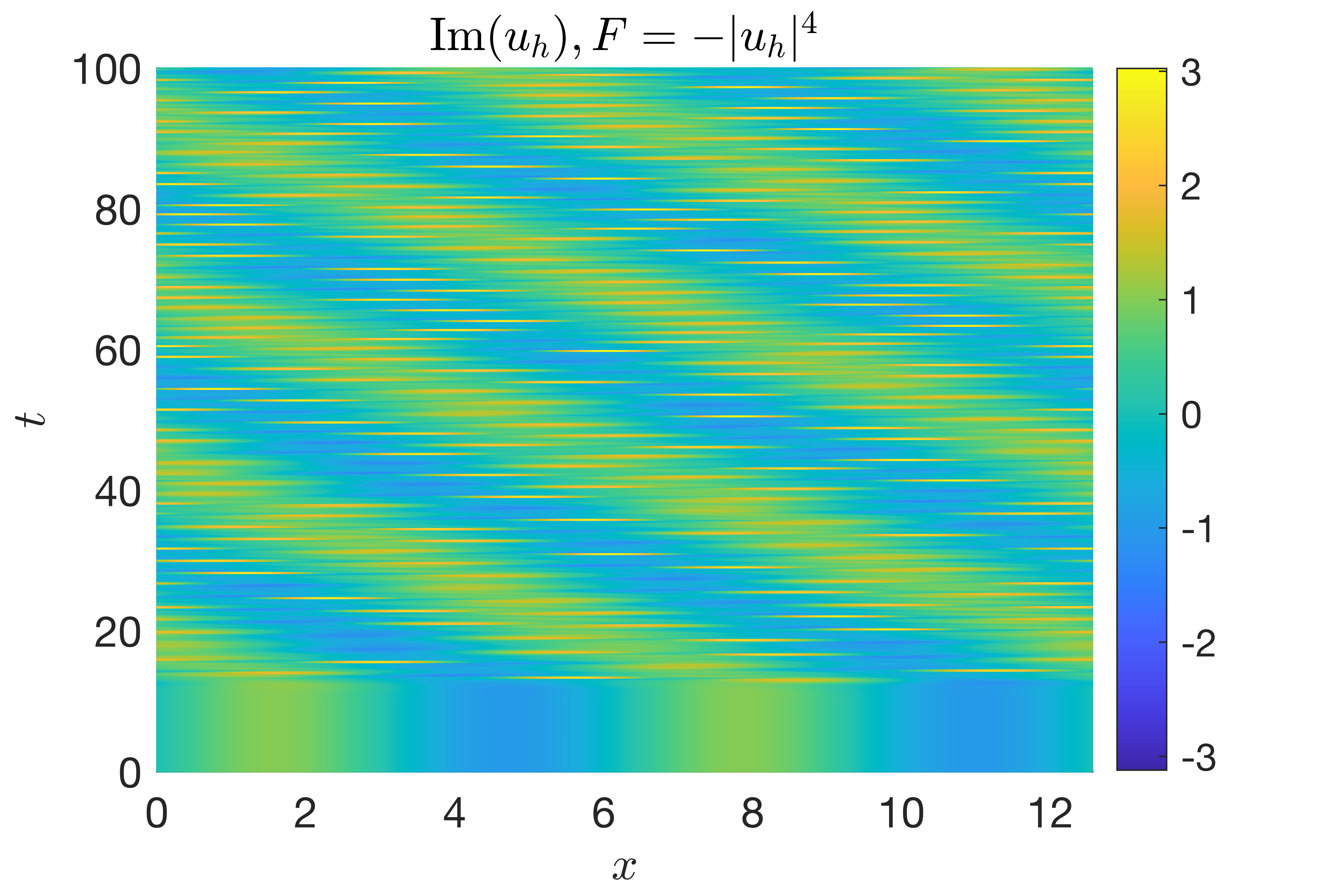}
 	\caption{Spatial-temporal dynamics of the numerical solutions.  Here the non-linearity of the medium are chosen to be $F'(|u_h|^2) = -|u_h|^2$ and $F'(|u_h|) = -|u_h|^4$ from the top to the bottom.  These plots readily indicate that the spatial-temporal dynamics of the problem heavily depend on the choice of the medium on one hand, and they are extremely complex which include oscillating.  {\textbf{Top}}:  the real part of $u_h$ (left) and the imaginary part of $u_h$ (right) with $F'(|u_h|^2) = -|u_h|^2$. {\textbf{Bottom}}: the real part of $u_h$ (left) and the imaginary part of $u_h$ (right) with $F'(|u_h|^2) = -|u_h|^4$. 
 	}\label{focusing}
 \end{figure}
 
 \subsection{Defocusing nonlinear problem in two dimensional space}
 In this example, we investigate the convergence of the ultra-weak LDG scheme for the nonlinear Schr\"{o}dinger equation with $F'(|u|^2) = |u|^2$ in two space dimensions. Precisely we solve
 \begin{equation}\label{2d_problem}
 iu_t + \Delta^2 u + u|u|^2 = 0, \quad (x,y,t) \in (0,2\pi)\times(0,2\pi)\times(0,1],
 \end{equation}
 with periodic boundary conditions and initial data
 \[u(x,y,0) = \cos(x+y) + i\sin(x+y).\]
 This yields the following exact solution
 \[u(x,y,t) = \cos(x+y+5t) + i\sin(x+y+5t).\]
 
The discretization is performed with elements over the Cartesian grids formed by $(x_k,y_j) = (kh,jh), k,j = 0,1,...,N$ with $h = 2\pi/N$.  Here, we only present the results for $u$, since the results for both $u$ and $w$ are similar to the problems in one space dimension. Table \ref{u_2d} displays the $L^2$ and $L^\infty$ errors for the real and the imaginary part of $u$. We observe optimal convergence for both cases.

 \begin{table}
 	\footnotesize
 	\begin{center}
 		\scalebox{1.0}{
 			\begin{tabular}{c c c c c c c c c c c}
 				\hline
 				~ & ~ &  $\mbox{Re}(u)$ &~ &~ &~&~& $\mbox{Im}(u)$& ~ & ~ & ~\\
 				\cline{3-6} \cline{8-11}
 				$q$ & $N\times N$ & $L^2$ error & order & $L^\infty$ error & order & ~ & $L^2$ error & order & $L^\infty$ error & order \\
 				\hline
 				1& $4\times 4$ & 7.40e-00& --& 2.17e-00& -- & ~ &7.40e-00 & -- & 2.17e-00 & --\\
 				~& $8\times 8$ & 3.28e-00& 1.17& 8.34e-01& 1.38& ~& 3.28e-00 & 1.17 & 8.34e-01 & 1.38\\
 				~& $16 \times 16$ & 9.05e-01& 1.86& 2.16e-01& 1.95& ~ &9.05e-01  & 1.86 & 2.16e-01 & 1.95\\
 				~& $32 \times 32$ & 2.30e-01& 1.98& 5.38e-02& 2.00& ~ &2.30e-01  & 1.98 & 5.38e-02 & 2.00\\
 				~ & ~ & ~ & ~ & ~ & ~ & ~ & ~ & ~ & ~ & ~\\
 				2& $4\times 4$ & 6.25e-01& --& 1.98e-01& -- & ~ &6.25e-01 & -- & 1.98e-01 & --\\
 				~& $8\times 8$ & 4.78e-02& 3.71& 1.67e-02& 3.58& ~& 4.78e-02 & 3.71 & 1.67e-02 & 3.57\\
 				~& $16\times 16$ & 4.91e-03& 3.28& 1.79e-03& 3.23& ~ &4.91e-03  & 3.28 & 1.79e-03 & 3.23\\
 				~& $32\times 32$ & 6.12e-04& 3.01& 2.22e-04& 3.01& ~ &6.12e-04  & 3.01 & 2.22e-04 & 3.01\\
 				~ & ~ & ~ & ~ & ~ & ~ & ~ & ~ & ~ & ~ & ~\\
 				3& $4\times 4$ & 2.73e-02& --& 1.09e-02& -- & ~ &2.73e-02 & -- & 1.09e-02 & --\\
 				~& $8\times 8$ & 1.55e-03& 4.14& 7.05e-04& 3.95& ~& 1.55e-03 & 4.14& 7.05e-04 & 3.95\\
 				~& $16\times 16$ & 9.54e-05& 4.02& 4.42e-05& 3.99& ~ &9.54e-05  & 4.02 & 4.42e-05 & 3.99\\
 				~& $32\times 32$ & 5.93e-06& 4.01& 2.73e-06& 4.02& ~ &5.93e-06  & 4.01 & 2.73e-06 & 4.02\\
 				\hline
 			\end{tabular}
 		}
 			\end{center}
 	\caption{\scriptsize{Errors and the corresponding convergence rates for $u$ of problem (\ref{2d_problem}) using $\mathcal{Q}^q$ polynomials on a uniform Cartesian mesh of $N\times N$ elements up to terminal time $T = 1$.}}\label{u_2d}
 \end{table} 
 
 \subsection{Mixed boundary condition in two dimensional space}
Lastly, we consider the nonlinear biharmonical Schr\"{o}dinger equation
\begin{equation}\label{2d_problem_bc}
 iu_t + \Delta^2 u + u(2 + \sin(|u|^2)) = f(x,y,t), \quad (x,y,t) \in (0,1)\times(0,1)\times(0,1],
 \end{equation}
 with the following mixed boundary conditions, 
\[  \mbox{top boundary:} \left\{
 \begin{aligned}
u(x, 2\pi, t) &= 0,\\
u_y(x, 2\pi, t) &= 0,
\end{aligned}
\right.\quad \mbox{bottom boundary:}
\left\{
\begin{aligned}
u_{yy}(x, 0, t) &= 0,\\
u_{yyy}(x, 0, t) &= 0,
\end{aligned}
\right.\]
\[\mbox{right boundary:}
\left\{
\begin{aligned}
u(1, y, t) &= u(0, y, t),\\
u_x(1, y, t) &= u_x(0,y,t),
\end{aligned}
\right.\quad \mbox{left boundary:}\left\{
\begin{aligned}
u_{xx}(0, y, t) &= u_{xx}(1, y, t),\\
u_{xxx}(0, y, t) &= u_{xxx}(1, y, t),
\end{aligned}
\right.
\]
and the exact solution
\begin{equation}\label{sol_2d_bc}
u(x,y,t) = y^4(y - 1)^4(\cos(2\pi x) + i\sin(2\pi x)) e^t.
\end{equation}
Then the external forcing $f(x,y,t)$ is obtained by solving (\ref{2d_problem_bc}) with (\ref{sol_2d_bc}).

\begin{table}
 	\footnotesize
 	\begin{center}
 		\scalebox{1.0}{
 			\begin{tabular}{c c c c c c c c c c c}
 				\hline
 				~ & ~ &  $\mbox{Re}(u)$ &~ &~ &~&~& $\mbox{Im}(u)$& ~ & ~ & ~\\
 				\cline{3-6} \cline{8-11}
 				$q$ & $N\times N$ & $L^2$ error & order & $L^\infty$ error & order & ~ & $L^2$ error & order & $L^\infty$ error & order \\
 				\hline
 				1& $15\times 15$ & 1.78e-02& --& 8.47e-02& --& ~& 1.78e-02 & -- & 8.47e-02 & --\\
 				~& $20 \times 20$ & 7.33e-03& 3.09& 3.50e-02& 3.07& ~ &7.33e-03  & 3.09 & 3.50e-02 & 3.06\\
 				~& $25 \times 25$ & 3.67e-03& 3.10& 1.77e-02& 3.06& ~ &3.67e-03  & 3.10 & 1.77e-02 & 3.07\\
 				~& $30 \times 30$ & 2.08e-03& 3.11& 1.01e-02& 3.08& ~ &2.08e-03  & 3.11 & 1.01e-02 & 3.08\\
 				~& $35 \times 35$ & 1.29e-03& 3.10& 6.27e-03& 3.09& ~ &1.29e-03  & 3.10 & 6.27e-03 & 3.09\\
 				~ & ~ & ~ & ~ & ~ & ~ & ~ & ~ & ~ & ~ & ~\\
 				2& $15\times 15$ & 3.29e-04& --& 2.85e-03& --& ~& 3.29e-04 & -- & 2.85e-03 & --\\
 				~& $20\times 20$ & 1.17e-04& 3.60& 1.04e-03& 3.50& ~ &1.17e-04  & 3.60 & 1.04e-03 & 3.50\\
 				~& $25\times 25$ & 5.18e-05& 3.65& 5.12e-04& 3.18& ~ &5.18e-05  & 3.65 & 5.11e-04 & 3.18\\
 				~& $30\times 30$ & 2.64e-05& 3.69& 2.94e-04& 3.04& ~ &2.64e-05  & 3.69 & 2.93e-04 & 3.06\\
 				~& $35\times 35$ & 1.50e-05& 3.67& 1.85e-04& 3.00& ~ &1.50e-05  & 3.67 & 1.85e-04 & 2.98\\
 				~ & ~ & ~ & ~ & ~ & ~ & ~ & ~ & ~ & ~ & ~\\
 				3& $15\times 15$ & 8.88e-06& --& 1.86e-04& --& ~& 8.88e-06 & --& 1.87e-04 & --\\
 				~& $20\times 20$ & 2.59e-06& 4.29& 6.38e-05& 3.73& ~ &2.59e-06  & 4.29 & 6.38e-05 & 3.74\\
 				~& $25\times 25$ & 9.83e-07& 4.34& 2.73e-05& 3.81& ~ &9.83e-07  & 4.34 & 2.73e-05 & 3.81\\
 				~& $30\times 30$ & 4.43e-07& 4.37& 1.35e-05& 3.86& ~ &4.43e-07  & 4.37 & 1.35e-05 & 3.86\\
 				~& $35\times 35$ & 2.25e-07& 4.39& 7.42e-06& 3.88& ~ &2.25e-07  & 4.39 & 7.42e-06 & 3.88\\
 				\hline
 			\end{tabular}
 		}
 			\end{center}
 	\caption{\scriptsize{Errors and the corresponding convergence rates for $u$ of problem (\ref{2d_problem_bc}) using $\mathcal{Q}^q$ polynomials on a uniform Cartesian mesh of $N\times N$ elements up to terminal time $T = 1$.}}\label{u_2d_bc}
 \end{table} 
 Table \ref{u_2d_bc} presents the $L^2/L^\infty$ errors of $u$ for the problem (\ref{2d_problem_bc}), while Table \ref{w_2d_bc} displays the $L^2/L^\infty$ errors of $w$. We observe optimal convergence for both $u$ and $w$ when $q = 2,3$. When $q = 1$, we note a super-convergence $q+2$ for $u$ in both $L^2$ and $L^\infty$; optimal convergence for $w$ in $L^2$, and a super-convergence $q+2$ for $L^\infty$.
 
 \begin{table}
 	\footnotesize
 	\begin{center}
 		\scalebox{1.0}{
 			\begin{tabular}{c c c c c c c c c c c}
 				\hline
 				~ & ~ &  $\mbox{Re}(w)$ &~ &~ &~&~& $\mbox{Im}(w)$& ~ & ~ & ~\\
 				\cline{3-6} \cline{8-11}
 				$q$ & $N\times N$ & $L^2$ error & order & $L^\infty$ error & order & ~ & $L^2$ error & order & $L^\infty$ error & order \\
 				\hline
 				1& $15\times 15$ & 7.10e-03& --& 3.22e-02& --& ~& 7.10e-03 & -- & 3.20e-02 & --\\
 				~& $20 \times 20$ & 2.94e-03& 3.07& 1.26e-02& 3.27& ~ &2.94e-03  & 3.07 & 1.26e-02 & 3.26\\
 				~& $25 \times 25$ & 1.56e-03& 2.83& 6.24e-03& 3.13& ~ &1.56e-03  & 2.83 & 6.23e-03 & 3.14\\
 				~& $30 \times 30$ & 9.61e-04& 2.66& 3.56e-03& 3.09& ~ &9.61e-04  & 2.66 & 3.55e-03 & 3.08\\
 				~& $35 \times 35$ & 6.51e-04& 2.53& 2.23e-03& 3.03& ~ &6.51e-04  & 2.53 & 2.23e-03 & 3.02\\
 				~ & ~ & ~ & ~ & ~ & ~ & ~ & ~ & ~ & ~ & ~\\
 				2& $15\times 15$ & 1.76e-04& --& 7.67e-04& --& ~& 1.76e-04 & -- & 7.67e-04 & --\\
 				~& $20\times 20$ & 7.49e-05& 2.96& 4.16e-04& 2.13& ~ &7.49e-05  & 2.96 & 4.16e-04 & 2.13\\
 				~& $25\times 25$ & 3.57e-05& 3.33& 1.67e-04& 4.10& ~ &3.57e-05  & 3.33 & 1.66e-04 & 4.11\\
 				~& $30\times 30$ & 2.04e-05& 3.08& 9.99e-05& 2.81& ~ &2.04e-05  & 3.08 & 9.99e-05 & 2.80\\
 				~& $35\times 35$ & 1.27e-05& 3.06& 6.45e-05& 2.84& ~ &1.27e-05  & 3.06 & 6.45e-05 & 2.84\\
 				~ & ~ & ~ & ~ & ~ & ~ & ~ & ~ & ~ & ~ & ~\\
 				3& $15\times 15$ & 5.92e-06& --& 2.28e-05& --& ~& 5.92e-06 & --& 2.27e-05 & --\\
 				~& $20\times 20$ & 1.88e-06& 4.00& 7.15e-06& 4.03& ~ &1.88e-06  & 4.00 & 7.15e-06 & 4.02\\
 				~& $25\times 25$ & 7.69e-07& 4.00& 2.91e-06& 4.02& ~ &7.69e-07  & 4.00 & 2.91e-06 & 4.03\\
 				~& $30\times 30$ & 3.71e-07& 4.00& 1.40e-06& 4.02& ~ &3.71e-07  & 4.00 & 1.40e-06 & 4.03\\
 				~& $35\times 35$ & 2.00e-07& 4.01& 7.55e-07& 4.01& ~ &2.00e-07  & 4.01 & 7.54e-07 & 4.01\\
 				\hline
 			\end{tabular}
 		}
 			\end{center}
 	\caption{\scriptsize{Errors and the corresponding convergence rates for $w$ of problem (\ref{2d_problem_bc}) using $\mathcal{Q}^q$ polynomials on a uniform Cartesian mesh of $N\times N$ elements up to terminal time $T = 1$.}}\label{w_2d_bc}
 \end{table}

\section{Brief Conclusions}\label{conclusion} 
In conclusion, we have developed and analyzed an ultra-weak LDG method for nonlinear biharmonic Schr\"{o}dinger equations in both one dimensional space and two dimensional space.  We extend the LDG scheme and introduce a second-order spatial derivative as an auxiliary variable.  This maneuver reduces the storage for the variables to be solved hence enhancing the computational efficiency. The scheme is also stable without employing any penalty term. We have proved and demonstrated the stability of the scheme for the special projection operators; moreover, we also obtain optimal $L^2$-error estimates under these settings. We also show that the fully discrete scheme combined with the Crank--Nicolson time integrator is conservative.  Our numerical experiments demonstrate the theoretical findings and present the rich and complex spatial-temporal dynamics of these linear/nonlinear problems.  

\section*{Acknowledgements} The author would like to thank Professor T. Hagstrom and Professor Q. Wang for very useful comments and discussions.

\bibliography{lu_new}
\bibliographystyle{plain}

\clearpage



\end{document}